\newtheorem{theorem}{Theorem}
\numberwithin{theorem}{section}
\newtheorem{corollary}[theorem]{Corollary}
\newtheorem{lemma}[theorem]{Lemma}
\newtheorem{proposition}[theorem]{Proposition}
\theoremstyle{definition}
\newtheorem{definition}[theorem]{Definition}
\numberwithin{equation}{section}
\newcommand{\supp}{\operatorname{supp}}
\newcommand{\leqf}{\leq^{\operatorname{fin}}}
\newcommand{\lef}{<^{\operatorname{fin}}}
\newcommand{\po}{\mathsf{PO}}
\newcommand{\wpo}{\mathsf{WPO}}
\newcommand{\rng}{\operatorname{rng}}
\newcommand{\rca}{\mathsf{RCA}}
\newcommand{\seq}{\operatorname{Seq}}
\newcommand{\ang}{\mathbin{\angle}}
\newcommand{\nf}{\mathbin{=_{\operatorname{NF}}}}
\newcommand{\tl}{\vartriangleleft}
\title[Reverse mathematics of a uniform Kruskal-Friedman theorem]{Reverse mathematics of\\ a uniform Kruskal-Friedman theorem}
\author{Anton Freund}
\address{Anton Freund, Department of Mathematics, Technical University of Darmstadt, Schloss\-garten\-str.~7, 64289~Darmstadt, Germany}
\email{freund@mathematik.tu-darmstadt.de}
\thanks{Funded by the Deutsche Forschungsgemeinschaft (DFG, German Research Foundation) -- Project number 460597863.}
\begin{document}

\begin{abstract}
The Kruskal-Friedman theorem asserts: in any infinite sequence of finite trees with ordinal labels, some tree can be embedded into a later one, by an embedding that respects a certain gap condition. This strengthening of the original Kruskal theorem has been proved by I.~K\v{r}\'{i}\v{z} (Ann.~Math.~1989), in confirmation of a conjecture due to H.~Friedman, who had established the result for finitely many labels. It provides one of the strongest mathematical examples for the independence phenomenon from G\"odel's theorems. The gap condition is particularly relevant due to its connection with the graph minor theorem of N.~Robertson and P.~Seymour. In the present paper, we consider a uniform version of the Kruskal-Friedman theorem, which extends the result from trees to general recursive data types. Our main theorem shows that this uniform version is equivalent both to $\Pi^1_1$-transfinite recursion and to a mini\-mal bad sequence principle of K\v{r}\'{i}\v{z}, over the base theory~$\mathsf{RCA_0}$ from reverse mathematics. This sheds new light on the role of infinity in finite combinatorics.
\end{abstract}

\keywords{Kruskal-Friedman theorem, gap condition, $\Pi^1_1$-transfinite recursion, well ordering principles, dilators, reverse mathematics, ordinal analysis}
\subjclass[2020]{03B30, 06A07, 05C83, 03F15, 03F35}

\maketitle

\section{Introduction}

We begin with a precise formulation of the Kruskal-Friedman theorem for trees. Let $\mathcal T[\nu]$ be the set of finite rooted trees with vertex labels from a given ordinal~$\nu$ (identified with the set of its predecessors). Elements of~$\mathcal T[\nu]$ will be written in the form $\alpha\star[t_0,\ldots,t_{n-1}]$, where $\alpha<\nu$ is the label of the root and $t_0,\ldots,t_{n-1}$ are the immediate subtrees. We ignore the order of subtrees but not their multiplicities. To determine a partial order on~$\mathcal T[\nu]$ by recursion, we declare that
\begin{equation*}
s=\alpha\star[s_0,\ldots,s_{m-1}]\leq_{\mathcal T[\nu]}\beta\star[t_0,\ldots,t_{n-1}]=t
\end{equation*}
holds precisely if we have $\alpha\leq\beta$ and one of the following clauses applies:
\begin{enumerate}[label=(\roman*)]
\item there is an injective function $f:\{0,\ldots,m-1\}\to\{0,\ldots,n-1\}$ such that we have $s_i\leq_{\mathcal T[\nu]}t_{f(i)}$ for all~$i<m$,
\item we have $s\leq_{\mathcal T[\nu]}t_j$ for some $j<n$.
\end{enumerate}
Note that clause~(i) corresponds to an embedding that sends the root to the root and subtrees of~$s$ into different subtrees of~$t$, which means that infima with respect to the tree order are preserved. In clause~(ii), the root of~$s$ is sent to an internal node~of~$t$. All labels below this node must be at least~$\alpha$, as the above condition~$\alpha\leq\beta$ applies recursively. If~$\alpha_i$ is the root label of $s_i$ in clause~(i), it follows that all labels below the image of $s_i$ in $t_{f(i)}$ are at least~$\alpha_i$ (while we may have $\alpha\leq\beta<\alpha_i$ at the root of~$t$). Recursively, the condition propagates to all `gaps' that the image of~$s$ leaves~in~$t$. Hence our order on~$\mathcal T[\nu]$ coincides with H.~Friedman's notion of strong gap embeddability, as determined by condition (1.2.5) of~\cite{kriz-conjecture} (see also~\cite{simpson85}).

By the Kruskal-Friedman theorem we shall mean the statement that $\mathcal T[\nu]$ is a well partial order ($\wpo$) for any ordinal~$\nu$, i.\,e., that any infinite sequence $t_0,t_1,\ldots$ in $\mathcal T[\nu]$ involves an inequality $t_i\leq_{\mathcal T[\nu]}t_j$ for some~$i<j$. Note that~$\nu=1$ amounts to the case without labels, which was proved by J.~Kruskal~\cite{kruskal60}. The extension~to all finite~$\nu<\omega$ is due to Friedman (see the presentation by S.~Simpson~\cite{simpson85}). It has been used in N.~Robertson and P.~Seymour's proof of their graph minor theorem~(see~\cite{friedman-robertson-seymour}). The Kruskal-Friedman theorem for general~$\nu$ was conjectured by Friedman and proved by I.~K\v{r}\'{i}\v{z}~\cite{kriz-conjecture}. For a somewhat different version of the gap condition, a proof of the general case was given by L.~Gordeev~\cite{gordeev-gap}.

The original Kruskal theorem is unprovable in a fairly strong axiom system~$\mathsf{ATR}_0$ that is associated with `predicative mathematics', while the extension to all $\nu<\omega$ is unprovable in the even stronger system $\Pi^1_1\text{-}\mathsf{CA}_0$, as shown by Friedman~\cite{simpson85}. Together with Robertson and Seymour~\cite{friedman-robertson-seymour}, the latter has deduced that $\Pi^1_1\text{-}\mathsf{CA}_0$ cannot prove the graph minor theorem (not even for bounded tree width). This is a particularly striking manifestation of the incompleteness phenomenon from G\"odel's theorems, for the following two reasons: First, the axiom system $\Pi^1_1\text{-}\mathsf{CA}_0$ is strong enough to prove the vast majority of mathematical theorems that have been analyzed in the research program of reverse mathematics (see~\cite{simpson09} for a comprehensive introduction). Secondly, the graph minor theorem has been described as one of ``the deepest theorems that mathematics has to offer"~\cite[Ch.~12]{diestel-graph-theory}. It is also of great importance for theoretical computer science, as it guarantees that certain tasks can be solved by polynomial time algorithms (see, e.\,g.,~\cite{robertson-seymour-polytime}).

For Kruskal's original theorem, we have two proofs that are optimal in different respects: On the one hand, C.~Nash-Williams~\cite{nash-williams63} has given an extremely elegant proof, for which he introduced his influential notion of `minimal bad sequence'. This proof, however, uses axioms that are stronger than necessary. Specifically, A.~Marcone~\cite{marcone-bad-sequence} has shown that a fundamental existence principle for minimal bad sequences is equivalent to $\Pi^1_1$-comprehension, the main axiom of $\Pi^1_1\text{-}\mathsf{CA}_0$. This axiom is strictly stronger than Kruskal's theorem. On the other hand, M.~Rathjen and A.~Weiermann~\cite{rathjen-weiermann-kruskal} have analyzed a proof via `reifications', which uses just the right axioms but appears less elegant. The tension between different forms of optimality -- elegance and the desire to use the weakest possible axioms -- is resolved in a recent paper by Rathjen, Weiermann and the present author~\cite{frw-kruskal}. There we investigate a uniform version of Kruskal's theorem, which extends the original result from trees to general recursive data types (see below for details). By the main result of~\cite{frw-kruskal}, this uniform Kruskal theorem is equivalent to $\Pi^1_1$-comprehension (over $\mathsf{RCA}_0$ with the chain antichain principle). So in the uniform case, the elegant proof via minimal bad sequences uses just the right axioms. This provides formal support for the intuition that Nash-Williams has given `the canonical' proof of Kruskal's theorem. In the present paper, we show that a uniform Kruskal-Friedman theorem is equivalent to $\Pi^1_1$-transfinite recursion (explained below). We also show that both are equivalent to a minimality principle for bad sequences under the gap condition, which K\v{r}\'{i}\v{z}~\cite{kriz-conjecture} has established in proving the Kruskal-Friedman theorem for trees.

Let us now recall the terminology from~\cite{frw-kruskal} that is needed to state the uniform Kruskal-Friedman theorem. To motivate the somewhat abstract definitions, we will use the case of trees as a running example. First, a map $f:X\to Y$ between partial orders is called a quasi embedding if it is order reflecting, i.\,e., if $f(x)\leq_Yf(x')$ implies $x\leq_X x'$. It is called an embedding if it is also order preserving, i.\,e., if the converse implication holds as well. Let us write~$\po$ for the category with the partial orders as objects and the quasi embeddings as morphisms. We say that a functor $W:\po\to\po$ preserves embeddings if $W(f):W(X)\to W(Y)$ is an embedding whenever the same holds for $f:X\to Y$. By $[X]^{<\omega}$ we denote the set of finite subsets of a given set~$X$. To make the construction functorial, we declare that $f:X\to Y$ maps to $[f]^{<\omega}:[X]^{<\omega}\to[Y]^{<\omega}$ with $[f]^{<\omega}(a):=\{f(x)\,|\,x\in a\}$. Let us also agree to write $\rng(f)=\{f(x)\,|\,x\in X\}$ for the range or image. The forgetful functor from partial orders to their underlying sets will be left implicit. Conversely, we will often consider subsets as suborders. We can now recall a central definition from~\cite{frw-kruskal}, which adapts J.-Y.~Girard's~\cite{girard-pi2} notion of dilator on linear~orders.

\begin{definition}\label{def:po-dilator}
A $\po$-dilator is a functor~$W:\po\to\po$ that preserves \mbox{embeddings} and admits a natural transformation $\supp:W\Rightarrow[\cdot]^{<\omega}$ such that the following `support condition' holds: for any embedding $f:X\to Y$ and any $\sigma\in W(Y)$ we have
\begin{equation*}
\supp_Y(\sigma)\subseteq\rng(f)\quad\Rightarrow\quad\sigma\in\rng(W(f)).
\end{equation*}
We call $W$ a $\wpo$-dilator if $W(X)$ is a well partial order whenever~$X$ is one.
\end{definition}

Concerning the support condition, we point out that the converse implication is automatic by naturality, as $\sigma=W(f)(\sigma_0)$ entails
\begin{equation*}
\supp_Y(\sigma)={\supp_Y}\circ W(f)(\sigma_0)=[f]^{<\omega}\circ\supp_X(\sigma_0)\subseteq\rng(f).
\end{equation*}
If the required transformation exists, then $\supp_Y(\sigma)$ is uniquely determined as the smallest $X\in[Y]^{<\omega}$ such that we have $\sigma\in\rng(W(f))$ for the inclusion~$f:X\hookrightarrow Y$. To give an example that will be relevant for the case of trees, we define $W(X)$ as the set of finite multisets $[x_0,\ldots,x_{n-1}]$ with entries $x_i\in X$. Let us declare that
\begin{equation*}
[x_0,\ldots,x_{m-1}]\leq_{W(X)}[y_0,\ldots,y_{n-1}]
\end{equation*}
holds precisely if there is an injective function $f:\{0,\ldots,m-1\}\to\{0,\ldots,n-1\}$ with $x_i\leq_X y_{f(i)}$ for all~$i<m$. Note that the transformation $X\mapsto W(X)$ preserves well partial orders, due to Higman's lemma. We obtain a $\wpo$-dilator by setting
\begin{align*}
W(f)([x_0,\ldots,x_{n-1}])&:=[f(x_0),\ldots,f(x_{n-1})],\\
\supp_X([x_0,\ldots,x_{n-1}])&:=\{x_0,\ldots,x_{n-1}\}.
\end{align*}
For subsets $a$ and $b$ of a partial order~$X$, we shall abbreviate
\begin{equation*}
a\leqf_X b\quad:\Leftrightarrow\quad\text{for each $x\in a$ there is a $y\in b$ with $x\leq_X y$}.
\end{equation*}
We use analogous notation with $<$ at the place of~$\leq$. In the case of singletons we write $x\leqf_X b$ and $a\leqf y$ rather than~$\{x\}\leqf_X b$ and $a\leqf_X\{y\}$, respectively. The multiset construction that we have just described has the following property.

\begin{definition}\label{def:normal}
A $\po$-dilator $W$ is called normal if we have
\begin{equation*}
\sigma\leq_{W(X)}\tau\quad\Rightarrow\quad\supp_X(\sigma)\leqf_X\supp_X(\tau),
\end{equation*}
for any partial order~$X$ and all elements $\sigma,\tau\in W(X)$.
\end{definition}

In the case of linear orders, the given condition ensures that normal dilators induce normal functions on the ordinals (essentially due to P.~Aczel~\cite{aczel-normal-functors}; see also Girard's~\cite{girard-pi2} notion of flower and the discussion in~\cite{freund-rathjen_derivatives}). There is no apparent connection with our use of normality, which is explained below. The following notion will be at the heart of our uniform Kruskal-Friedman theorem.

\begin{definition}\label{def:nu-FP}
For an ordinal~$\nu$ and a normal $\po$-dilator~$W$, a $\nu$-Kruskal fixed point of~$W$ consists of a partial order~$X$ and a bijection $\kappa:\nu\times W(X)\to X$ with
\begin{equation*}
\kappa(\alpha,\sigma)\leq_X \kappa(\beta,\tau)\quad\Leftrightarrow\quad\alpha\leq\beta\text{ and }\left(\,\sigma\leq_{W(X)}\tau\text{ or }\kappa(\alpha,\sigma)\leqf_X\supp_X(\tau)\,\right).
\end{equation*}
We say that $(X,\kappa)$ is initial if any other $\nu$-Kruskal fixed point~$(Y,\pi)$ of~$W$ admits a unique quasi embedding~$f:X\to Y$ such that the diagram
\begin{equation*}
\begin{tikzcd}
\nu\times W(X)\arrow[r,"\kappa"]\arrow[d,swap,"\nu\times W(f)"] & X\arrow[d,"f"]\\
\nu\times W(Y)\arrow[r,"\pi"] & Y
\end{tikzcd}
\end{equation*}
commutes, where $\nu\times W(f)$ denotes the map $(\alpha,\sigma)\mapsto(\alpha,W(f)(\sigma))$.
\end{definition}

Let us remark that the unique~$f$ in the definition is always an embedding, due to the last claim in Theorem~\ref{thm:initial-criterion} below. When $W$ is the multiset construction from above, a $\nu$-Kruskal fixed point is given by the set $\mathcal T[\nu]$ of finite rooted trees with vertex labels from~$\nu$ and the function
\begin{equation*}
\kappa:\nu\times W(\mathcal T[\nu])\to\mathcal T[\nu]\quad\text{with}\quad\kappa(\alpha,[t_0,\ldots,t_{n-1}]):=\alpha\star[t_0,\ldots,t_{n-1}].
\end{equation*}
Here the given value of~$\kappa$ is the tree with root label~$\alpha$ and immediate subtrees~$t_i$. One should observe that the equivalence in Definition~\ref{def:nu-FP} coincides with the recursive characterization of gap embeddability via clauses~(i) and~(ii) above. The given fixed point is initial, since each finite tree can be generated by finitely many applications of the function~$\kappa$ (starting with the empty multiset to generate leaves). Formally, this can be confirmed via condition~(ii) in Theorem~\ref{thm:initial-criterion}. Let us point out that the example motivates all features of our abstract definitions: Functoriality is used to state the universal property that captures the intuitive idea of a recursively generated structure. Supports provide a general notion of immediate subtree, not least in the equivalence in Definition~\ref{def:nu-FP}. Concerning normality, we first note that no tree $s$ can be embedded into a tree~$t$ of smaller height. This can be verified by induction over the recursive definition of embeddability, as given at the beginning of the present paper. In clause~(ii) of that definition, the induction hypothesis will ensure that each subtree~$s_i$ of~$s$ has at most the height of some subtree~$t_{f(i)}$ of~$t$. The role of normality is to provide the function~$f$ on indices. Lemma~\ref{lem:heights} below is a generalization of the given argument. 

In the next section, we will present a construction that yields an initial \mbox{$\nu$-}fixed point of any normal $\po$-dilator. This fixed point must be isomorphic to any other initial $\nu$-fixed point of the same $\po$-dilator, by a standard argument about universal properties (see the paragraph after Corollary~\ref{cor:Kruskal-fp-exist} below). Let us now give a precise definition of an expression that was used above: by a recursive data type with labels from~$\nu$, we shall mean the initial $\nu$-fixed point of some normal $\wpo$-dilator~$W$ (note the requirement that~$W$ preserves well partial orders). As demonstrated by our running example, the \mbox{$\nu$-}labelled trees with gap condition arise as the special case where $W$ is the multiset construction. The following asserts that the Kruskal-Friedman theorem holds not only for trees but for general recursive data types.

\begin{theorem}[Uniform Kruskal-Friedman Theorem for~$\nu$]\label{thm:unif-KF}
When $W$ is a normal $\wpo$-dilator, the initial $\nu$-Kruskal fixed point of~$W$ is a well partial order.
\end{theorem}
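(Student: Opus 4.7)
The plan is to adapt Kří\v{z}'s minimal bad sequence argument from the case of trees to the general setting of normal WPO-dilators. Suppose, for contradiction, that the initial $\nu$-Kruskal fixed point $(X,\kappa)$ is not a WPO. Assuming the construction of the initial fixed point presented in Section~2, every $x\in X$ has a unique presentation $x=\kappa(\alpha,\sigma)$, so I may declare ``$y$ is an immediate subterm of $x$'' to mean $y\in\supp_X(\sigma)$, and let $\prec$ denote the transitive closure of this relation. Initiality guarantees that $\prec$ is well-founded on $X$. I then invoke Kří\v{z}'s minimal bad sequence principle to pick a bad sequence $(x_n)_{n\in\omega}$ in $X$ that is $\prec$-minimal, in the usual sense: no bad sequence agrees with $(x_n)$ on an initial segment $x_0,\ldots,x_{m-1}$ and then continues with $\prec$-predecessors of $x_m$.

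Write $x_n=\kappa(\alpha_n,\sigma_n)$. Since $\nu$ is a well order, an infinite subsequence has weakly increasing labels; replacing $(x_n)$ by this still bad subsequence, I may assume $\alpha_0\leq\alpha_1\leq\cdots$ from the outset. Define
\begin{equation*}
Y\;:=\;\bigcup_{n\in\omega}\supp_X(\sigma_n),
\end{equation*}
as a suborder of $X$. Each $y\in Y$ lies in some $\supp_X(\sigma_n)$, hence $y\prec x_n$. A tail-replacement argument then forces $Y$ to be a WPO: given a hypothetical bad sequence $(y_k)$ in $Y$, pick indices $n_k$ with $y_k\in\supp_X(\sigma_{n_k})$ and use them to splice an initial segment of $(x_n)$ with a suitable subsequence of $(y_k)$, contradicting minimality of $(x_n)$.

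Since $W$ is a WPO-dilator, $W(Y)$ is a WPO. The support condition provides, for each $n$, a unique $\tilde\sigma_n\in W(Y)$ with $W(\iota)(\tilde\sigma_n)=\sigma_n$, where $\iota:Y\hookrightarrow X$ is the inclusion, which is an embedding of suborders. Applying WPO of $W(Y)$ to $(\tilde\sigma_n)$ yields indices $i<j$ with $\tilde\sigma_i\leq_{W(Y)}\tilde\sigma_j$; since $W$ preserves embeddings, $W(\iota)$ is order-preserving, so $\sigma_i\leq_{W(X)}\sigma_j$. Combined with $\alpha_i\leq\alpha_j$, the equivalence in Definition~\ref{def:nu-FP} gives $x_i\leq_X x_j$, contradicting badness.

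The main obstacle is the middle paragraph: one must verify that Kří\v{z}'s minimality really delivers WPO of $Y$, so that the well-founded relation $\prec$ interacts correctly with the gap condition and the tail-replacement lemma survives the label-subsequencing step. This is also the point at which normality of $W$ enters, since it controls how supports behave under $\leq_{W(X)}$ and thereby keeps the subterm order $\prec$ compatible with $\leq_X$. Everything else -- the decomposition via $\kappa$, the subsequencing by labels, and the use of the support condition to descend from $W(X)$ to $W(Y)$ -- is routine once the central minimality step is in place.
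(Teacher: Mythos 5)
Your outline reproduces the paper's high-level strategy (Theorem~\ref{thm:Kriz-to-KF}): obtain a suitably minimal bad sequence, pass to $Y:=\bigcup_n\supp_X(\sigma_n)$, show $Y$ is a $\wpo$, and then use the $\wpo$-dilator property together with the equivalence in Definition~\ref{def:nu-FP} to contradict badness. The final paragraph of that argument is fine. But the step you defer -- ``one must verify that K\v{r}\'{i}\v{z}'s minimality really delivers $\wpo$ of $Y$'' -- is not a verification detail; it is the entire content of the theorem, and the version of minimality you actually invoke does not deliver it. You ask for a bad sequence that is $\prec$-minimal ``in the usual sense'' (Nash-Williams splicing with respect to the plain subterm relation). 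Under the gap condition a subterm is in general \emph{not} below its superterm: $y\in\supp_X(\sigma)$ gives $y\leq_X\kappa(\alpha,\sigma)$ only when $q(y)\leq\alpha$. Hence the tail-replacement argument breaks: if $x_i\leq_X y_k$ with $y_k\in\supp_X(\sigma_{n_k})$, you cannot conclude $x_i\leq_X x_{n_k}$, so the spliced sequence need not be bad and no contradiction with $\prec$-minimality arises. Moreover, your order of operations is wrong: first choosing a minimal bad sequence and then passing to a subsequence with weakly increasing labels destroys whatever minimality property you had, so the minimality and the regularity cannot be arranged independently.

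What the paper uses instead is K\v{r}\'{i}\v{z}'s minimality principle (Theorem~\ref{thm:kriz-minimality}) for the relation $\ll$ of Definition~\ref{def:fp-gap-order}, which builds the label condition into the subterm relation; its conclusion gives a \emph{regular} bad $g:M\to X$ such that no bad $h:N\to X$ with $N\in[M]^\omega$ satisfies $h(i)\ll g(i)$ pointwise at the same indices. With this pointwise form, the sequence $h(i)\in\supp_X(\sigma_i)$ contradicts minimality directly, with no splicing and no need for subterms to be $\leq_X$-below their superterms. The existence of such a minimal sequence is precisely the hard part: it is not a single minimization but a transfinite iteration of minimizations (of length $\nu+2$ in the paper's modified construction, using the ranks $p_\beta$), and this is exactly where the strength $\Pi^1_1$-recursion along $\nu$ enters -- an ordinary Nash-Williams minimal bad sequence argument, even granted via $\Pi^1_1$-comprehension, is both too weak in principle and inapplicable here for the reason above. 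So the proposal has a genuine gap at its central step.
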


In addition to proving the theorem, we aim to analyze it from the viewpoint of reverse mathematics. As a precondition, we need to explain how our definitions are to be formalized in that framework. The crucial observation is that any $\po$-dilator is determined by its restriction to the category of finite partial orders. For the case of dilators on linear orders, the analogous result is due to Girard~\cite{girard-pi2}. It~rests on the idea that any comparison $\sigma_0\leq_{W(X)}\sigma_1$ is already determined on the finite suborder~$a:=\bigcup_{i\leq 1}\supp_X(\sigma_i)$. Indeed, the support condition from Definition~\ref{def:po-dilator} allows us to write $\sigma_i=W(\iota)(\sigma_i')$ for the inclusion~$\iota:a\hookrightarrow X$. Since $W(\iota)$ is an embedding, $\sigma_0\leq_{W(X)}\sigma_1$ is equivalent to $\sigma_0'\leq_{W(a)}\sigma_1'$. This argument also reveals why Definition~\ref{def:po-dilator} includes the condition that $\po$-dilators preserve embeddings. In fact, some readers may wonder why quasi embeddings are considered at all. The reason is that a linearization of an order~$Y$ can be represented by a surjective quasi embedding $f:X\to Y$ where $X$ is linear. For the purpose of formalization, the point is that the restrictions of $\po$-dilators to finite orders are sets rather than proper classes. To represent them in reverse mathematics, we presume that these restrictions are countable and indeed coded by subsets of~$\mathbb N$. In~\cite{frw-kruskal} it is carefully shown how all arguments can be carried out on the level of representations. The latter are not made explicit in the present paper, in order to improve readability.

As we work in the framework of reverse mathematics, we shall identify ordinals with countable well orders. The principle of $\Pi^1_1$-recursion along an ordinal~$\nu$ allows us to define hierarchies $\langle Y_\alpha\,|\,\alpha<\nu\rangle$ by recursive clauses
\begin{equation*}
Y_\alpha=\{n\in\mathbb N\,|\,\varphi(n,\alpha,\langle Y_\gamma\,|\,\gamma<\alpha\rangle)\},
\end{equation*}
where $\varphi$ must be a $\Pi^1_1$-formula (i.\,e., of the form $\forall X\!\subseteq\!\mathbb N\,\forall/\exists\, x_1\!\in\!\mathbb N\ldots\forall/\exists\, x_k\!\in\!\mathbb N.\,\theta$ with no quantifiers in~$\theta$). By $\Pi^1_1$-transfinite recursion, we shall mean the statement that $\Pi^1_1$-recursion is available along any (countable) ordinal. For a more detailed discussion of transfinite recursion in reverse mathematics, we refer to~\cite{simpson09}. Finally, we are ready to state our main result. Concerning the following statement~(i), we note that $\rca_0$ proves the existence and uniqueness of initial $\nu$-Kruskal fixed points (but not the fact that they are well partial orders), as shown in the next section. 

\begin{theorem}\label{thm:main}
Over $\mathsf{RCA}_0$ extended by the chain antichain principle ($\mathsf{CAC}$), the following are equivalent for any infinite ordinal~$\nu$:
\begin{enumerate}[label=(\roman*)]
\item the uniform Kruskal-Friedman theorem for~$\nu$,
\item $\Pi^1_1$-recursion along~$\nu$.
\end{enumerate}
\end{theorem}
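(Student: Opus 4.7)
The plan is to prove the two directions separately, each routed through a different intermediate principle that parallels the template from~\cite{frw-kruskal} for the unlabeled uniform Kruskal theorem. For (ii) $\Rightarrow$ (i) the intermediate step is K\v{r}\'{i}\v{z}'s minimal bad sequence principle for the gap condition, and for (i) $\Rightarrow$ (ii) it is a well-ordering principle asserting that $\nu$-iterated Bachmann--Howard fixed points of dilators are well-founded.

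For (ii) $\Rightarrow$ (i), I first derive K\v{r}\'{i}\v{z}'s minimal bad sequence (MBS) principle from $\Pi^1_1$-recursion along $\nu$. Transfinite recursion is what the construction demands, since the minimal choice at label $\alpha$ must refer back to the minimal bad tails already available for labels below $\alpha$. Once the MBS principle is in hand, I apply the standard Nash--Williams--K\v{r}\'{i}\v{z} argument to the initial $\nu$-Kruskal fixed point $X$ of a normal $\wpo$-dilator $W$. Given an alleged bad sequence, extract a minimal bad sequence $(x_n)$ and decompose each $x_n = \kappa(\alpha_n,\sigma_n)$. An application of $\mathsf{CAC}$ to the partial order on $\mathbb{N}$ where $m\preceq n$ iff $m\le n$ and $\alpha_m\le\alpha_n$ yields an infinite subsequence along which $(\alpha_n)$ is non-decreasing (an antichain would be a strictly descending sequence in the well-order $\nu$). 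On this subsequence, the set $A$ of elements that appear in some $\supp_X(\sigma_n)$ must be a WPO: a bad sequence in $A$ could be spliced, via normality, to form a bad sequence in $X$ whose initial segment is strictly smaller than $(x_n)$ in the well-founded rank induced by $\kappa$, contradicting minimality. Since $W$ is a $\wpo$-dilator, $W(A)$ is a WPO, so $(\sigma_n)$ admits a good pair $\sigma_i\le_{W(X)}\sigma_j$ with $i<j$; combined with $\alpha_i\le\alpha_j$, the defining equivalence in Definition~\ref{def:nu-FP} forces $x_i\le_X x_j$, the required contradiction.

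For (i) $\Rightarrow$ (ii), I plan to use the uniform Kruskal-Friedman theorem to extract a well-ordering principle that is known (from the recent literature on well-ordering principles and iterated fixed points) to be equivalent over $\rca_0$ to $\Pi^1_1$-recursion along $\nu$. Concretely, given a dilator $D$ on linear orders, I construct a normal $\wpo$-dilator $W_D$ on partial orders whose initial $\nu$-Kruskal fixed point internally codes the $\nu$-stage iterated Bachmann--Howard fixed point of $D$; when the input $D$ is linear, the coded fixed point is linear as well, so the WPO conclusion coincides with well-foundedness. A given $\Pi^1_1$-formula $\varphi(n,\alpha,Y)$ can then be unpacked as defining a dilator whose iterated fixed point has enough internal structure to read off the required hierarchy $\langle Y_\alpha\mid\alpha<\nu\rangle$ stage by stage.

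The main obstacles are twofold. First, in the MBS argument, the interleaving of combinatorial minimality on the recursive term structure with ordinal minimality on the labels is exactly what forces genuine $\Pi^1_1$-\emph{transfinite} recursion rather than plain $\Pi^1_1$-comprehension; pinning down the right complexity measure and verifying the splicing step in the presence of the gap condition is the delicate point. Second, in the reversal, the construction of $W_D$ must encode both successor and limit stages of the Bachmann--Howard iteration faithfully inside a single normal $\wpo$-dilator. Carrying this out over $\rca_0+\mathsf{CAC}$, where only the restrictions of dilators to finite partial orders are directly available as sets, will rely on the finite-support representation machinery from~\cite{frw-kruskal}.
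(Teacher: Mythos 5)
Your overall architecture does match the paper's: $\Pi^1_1$-recursion $\Rightarrow$ K\v{r}\'{i}\v{z}'s minimality principle $\Rightarrow$ uniform Kruskal-Friedman, and conversely uniform Kruskal-Friedman $\Rightarrow$ well-foundedness of $\nu$-fixed points of linear dilators $\Rightarrow$ $\Pi^1_1$-recursion via the known equivalence from~\cite{FR_Pi11-recursion}. But both directions have genuine gaps. In (ii)$\Rightarrow$(i), the step you dispose of in one sentence -- deriving K\v{r}\'{i}\v{z}'s principle from $\Pi^1_1$-recursion \emph{along $\nu$} -- is precisely the hard point: K\v{r}\'{i}\v{z}'s own construction is only guaranteed to terminate at an ordinal of uncountable cofinality, so ``the construction demands transfinite recursion'' does not yield the bound~$\nu$. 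The paper has to modify the construction: at stage $\beta+1$ one minimizes with respect to $p_\beta(t)=\omega\cdot q_\beta(t)+r(t)$ (with a rank $r$ made available by the added finiteness condition~(v) in Definition~\ref{def:gap-order}), and one proves the extra implication~(\ref{eq_q-beta-to-q}) forcing the labels $q(g_\alpha(k))$ to grow, so that the recursion must close by stage $\nu+2$; in addition each stage must be shown arithmetical/$\Pi^1_1$-definable from the previous ones (via $\beta$-minimal branches of perfect trees) for $\Pi^1_1$-recursion to apply. None of this appears in your plan. Your application of the minimality principle is also off: you describe the plain Nash-Williams splice (rank-minimality, splicing a bad sequence of support elements), but under the gap condition the spliced sequence need not be bad -- one needs the label conditions (ii)/(iii) of Definition~\ref{def:gap-order}. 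The correct route (Theorem~\ref{thm:Kriz-to-KF}) first equips the fixed point with the $\nu$-gap-order structure $(q,\ll)$ via the $K_\gamma$-sets (Definition~\ref{def:fp-gap-order}, Lemma~\ref{lem:fp-gap-order}, which must be verified) and then contradicts clause~(ii) of Theorem~\ref{thm:kriz-minimality} directly, with regularity supplied by the principle itself rather than by $\mathsf{CAC}$.

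In (i)$\Rightarrow$(ii), deferring to the equivalence of $\Pi^1_1$-recursion along~$\nu$ with well-foundedness of $\nu$-fixed points of dilators is legitimate (the paper cites \cite{FR_Pi11-recursion} for this), but your bridge to it is unsubstantiated and partly wrong. The initial $\nu$-Kruskal fixed point of the $\wpo$-dilator associated with a linear~$D$ is a genuinely partial order with gap condition; it is not linear and does not literally code the iterated fixed point. What must be proved is that the linear $\nu$-fixed point $\psi_\nu(D)$ admits an order-reflecting map into $\mathcal TW[\nu]$; this is the paper's Theorem~\ref{thm:fp-lin-part}, whose construction proceeds by recursion along the collapse and whose verification hinges on the characterization of $\rng(\pi)$ through the $G_\gamma$-functions (statement~(\ref{eq:linearize-gap})) -- the technical heart of the direction, absent from your sketch. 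You also need the non-trivial input that every linear dilator quasi-embeds into a \emph{normal} $\wpo$-dilator (\cite{frw-kruskal}, where preservation of well partial orders takes substantial work), which in turn requires arithmetical comprehension, so a bootstrap such as Lemma~\ref{lem:KF-to-ACA} (uniform Kruskal-Friedman implies $\mathsf{ACA}$ via Higman's lemma) has to be established first over $\rca_0+\mathsf{CAC}$.
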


The restriction to infinite~$\nu$ is inherited from~\cite{FR_Pi11-recursion}, where it saves a tedious case distinction. It seems likely that the present theorem and the result from~\cite{FR_Pi11-recursion} extend to all finite~$\nu$, but this has not been verified. The important case of~$\nu=1$ is the main result of~\cite{frw-kruskal} (with \cite{freund-equivalence,freund-computable} replacing~\cite{FR_Pi11-recursion}). The chain antichain principle in the base theory of Theorem~\ref{thm:main} ensures that different definitions of well partial order are equivalent. Specifically, we adopt the definition that $X$ is a well partial order if any infinite sequence $x_0,x_1,\ldots\subseteq X$ involves an inequality $x_i\leq_X x_j$ for some~$i<j$. In the presence of $\mathsf{CAC}$, one may equivalently ask for a strictly increasing $f:\mathbb N\to\mathbb N$ such that $x_{f(i)}\leq_X x_{f(i+1)}$ holds for all~$i\in\mathbb N$ (see~\cite{cholak-RM-wpo} for a detailed analysis). Note that $\mathsf{CAC}$ follows from Ramsey's theorem for pairs, which is an extremely weak consequence of our statement~(ii). It is somewhat surprising that no proof of $\mathsf{CAC}$ from statement~(i) has been found so far. According to Lemma~\ref{lem:bootstrap-KF-univ} of the present paper, $\mathsf{CAC}$ follows when~(i) is available for all ordinals. This improves the base theory in the following (where finite~$\nu$ may be included due to Corollary~\ref{cor:KF-monotone}).

\begin{corollary}\label{cor:FP-TR}
The following are equivalent over $\mathsf{RCA}_0$:
\begin{enumerate}[label=(\roman*)]
\item the uniform Kruskal-Friedman theorem for all~$\nu$,
\item $\Pi^1_1$-transfinite recursion.
\end{enumerate}
\end{corollary}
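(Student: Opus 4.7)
The plan is straightforward: bootstrap $\mathsf{CAC}$ out of each direction and then reduce to Theorem~\ref{thm:main}. Both directions reduce to working in $\mathsf{RCA}_0+\mathsf{CAC}$, where the per-$\nu$ equivalence from Theorem~\ref{thm:main} is available.

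For the direction (ii) $\Rightarrow$ (i), I would first invoke the observation already made after Theorem~\ref{thm:main}: Ramsey's theorem for pairs is a very weak consequence of $\Pi^1_1$-transfinite recursion, and $\mathsf{CAC}$ follows from Ramsey's theorem for pairs. Hence $\mathsf{RCA}_0$ together with (ii) proves $\mathsf{CAC}$, so we may work inside $\mathsf{RCA}_0+\mathsf{CAC}$. For each infinite ordinal~$\nu$, $\Pi^1_1$-recursion along~$\nu$ is a consequence of (ii), and Theorem~\ref{thm:main} then yields the uniform Kruskal-Friedman theorem for~$\nu$. For finite~$\nu$, I would appeal to Corollary~\ref{cor:KF-monotone} to transfer the statement from some infinite ordinal down to the finite case.

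For the direction (i) $\Rightarrow$ (ii), the key ingredient is Lemma~\ref{lem:bootstrap-KF-univ}: the assumption that the uniform Kruskal-Friedman theorem holds for all ordinals already implies $\mathsf{CAC}$ over $\mathsf{RCA}_0$. Once $\mathsf{CAC}$ is available, I again pass to the base theory of Theorem~\ref{thm:main}. Applying the per-$\nu$ equivalence in the opposite direction, (i) for each fixed infinite~$\nu$ yields $\Pi^1_1$-recursion along~$\nu$. Since this holds for every infinite countable well order~$\nu$, and $\Pi^1_1$-recursion along a finite ordinal is trivial in $\mathsf{RCA}_0$ (reducing to finitely many iterations of $\Pi^1_1$-comprehension, which in turn is subsumed by the infinite case via an elementary argument inside any infinite~$\nu$), we obtain full $\Pi^1_1$-transfinite recursion.

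The main obstacle I would expect is purely bookkeeping: the bootstrap lemma must be shown to apply even when we have not yet verified $\mathsf{CAC}$, so its own proof must be carried out in $\mathsf{RCA}_0$ alone, and the reduction for finite~$\nu$ via Corollary~\ref{cor:KF-monotone} requires that monotonicity itself be provable without $\mathsf{CAC}$. Assuming these earlier results are stated in the form suggested by their labels, the corollary follows by a short two-step argument, with essentially no new content beyond combining Theorem~\ref{thm:main}, Lemma~\ref{lem:bootstrap-KF-univ}, and Corollary~\ref{cor:KF-monotone}.
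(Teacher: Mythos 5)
Your proposal is correct and matches the paper's intended argument: the paper derives this corollary from Theorem~\ref{thm:main} exactly as you do, using Lemma~\ref{lem:bootstrap-KF-univ} to obtain $\mathsf{CAC}$ from statement~(i) over $\mathsf{RCA}_0$, the fact that $\mathsf{CAC}$ is a weak consequence of~(ii), and Corollary~\ref{cor:KF-monotone} to cover finite~$\nu$. Your bookkeeping concerns are also resolved as you hoped, since both auxiliary results are proved over $\mathsf{RCA}_0$ alone.
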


In order to obtain another conclusion of interest, we recall that the following statement~(ii) is equivalent to $\Pi^1_1$-recursion along~$\omega$ (see, e.\,g.,~\cite[Section~9]{FR_Pi11-recursion}).

\begin{corollary}
The following are equivalent over $\mathsf{RCA}_0+\mathsf{CAC}$:
\begin{enumerate}[label=(\roman*)]
\item the uniform Kruskal-Friedman theorem for~$\nu=\omega$,
\item every subset of~$\mathbb N$ is contained in a countable $\beta$-model of $\Pi^1_1$-comprehension.
\end{enumerate}
\end{corollary}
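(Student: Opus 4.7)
The plan is to derive this corollary as a direct specialization of Theorem~\ref{thm:main} combined with the cited characterization of $\Pi^1_1$-recursion along~$\omega$. Concretely, I would instantiate Theorem~\ref{thm:main} with $\nu=\omega$, which is permitted because $\omega$ is infinite, to obtain over $\mathsf{RCA}_0+\mathsf{CAC}$ the equivalence of statement~(i) with $\Pi^1_1$-recursion along~$\omega$. The remark immediately preceding the corollary records that the latter principle is in turn equivalent, already over $\mathsf{RCA}_0$, to statement~(ii), by the result cited from \cite[Section~9]{FR_Pi11-recursion}. Chaining the two equivalences yields the conclusion over $\mathsf{RCA}_0+\mathsf{CAC}$.

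Beyond this composition there is essentially nothing to verify: the second equivalence holds already in $\mathsf{RCA}_0$ and thus continues to hold after adjoining $\mathsf{CAC}$, so the base theory is not an issue, and no case distinction for small~$\nu$ arises because we take $\nu=\omega$. Accordingly, the only genuine obstacle is hidden in Theorem~\ref{thm:main} itself; the corollary is a transparent transcription once that theorem and the $\beta$-model characterization of $\Pi^1_1$-recursion along~$\omega$ are both in hand.
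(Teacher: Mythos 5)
Your proposal is correct and matches the paper's intent exactly: the corollary is stated right after the remark recalling that statement~(ii) is equivalent to $\Pi^1_1$-recursion along~$\omega$, and the paper's (implicit) proof is precisely the specialization of Theorem~\ref{thm:main} to $\nu=\omega$ chained with that cited equivalence. Nothing further is needed.
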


In the rest of this introduction, we summarize how Theorem~\ref{thm:main} will be proved. Let us first observe that the functions $\kappa:\nu\times W(X)\to X$ from Definition~\ref{def:nu-FP} are order preserving, if we agree that $(\alpha,\sigma)\leq(\beta,\tau)$ is equivalent to the conjunction of $\alpha\leq\beta$ and $\sigma\leq\tau$. Now consider a dilator~$D$ on linear orders. To make~$\nu\times D(Z)$ linear, we add that $(\alpha,\sigma)\leq(\beta,\tau)$ holds whenever we have $\alpha<\beta$. But then we cannot expect to get an order preserving map $\psi:\nu\times D(Z)\to Z$, as the domain may always have larger order type than the range. One way out is to allow~$\psi$ to be partial. If~$\psi$ is bijective on its domain, we can represent it by its total inverse
\begin{equation*}
\pi:Y\to\nu\times D(Y).
\end{equation*}
In~\cite[Definition~1.4]{FR_Pi11-recursion} we have formulated a condition which ensures that the range of~$\pi$ is large. If $\pi$ satisfies this condition, it is called a $\nu$-collapse of~$D$. The main result of~\cite{FR_Pi11-recursion} shows that $\Pi^1_1$-recursion along~$\nu$ is equivalent to the statement that any dilator~$D$ on (linear) well orders admits a $\nu$-collapse for some well order~$Y$. We will show that this last statement follows from~(ii) of Theorem~\ref{thm:main}. Given a dilator~$D$, one can find a normal \mbox{$\wpo$-}dilator~$W$ that admits a natural family of order reflecting maps $\eta_X:D(X)\to W(X)$, one for each linear order~$X$. This is a non-trivial result established in~\cite{frw-kruskal}. As in the case of partial orders, it is not hard to construct a $\nu$-collapse $\pi:Y\to\nu\times D(Y)$ for some linear order~$Y$. The crucial task is to show that the latter is well founded. Given~(ii) of Theorem~\ref{thm:main}, the $\nu$-Kruskal fixed point~$Z$ of~$W$ is a well partial order. We will construct an order reflecting function $f:Y\to Z$ (i.\,e., a partial linearization of~$Y$ by~$Z$) such that
\begin{equation*}
\begin{tikzcd}[column sep=large]
Y\ar[rr,"f"]\ar[d,swap,"\pi"] & & Z\\
\nu\times D(Y)\ar[r,"\nu\times\eta_Y"] & \nu\times W(Y)\ar[r,"\nu\times W(f)"] & \nu\times W(Z)\ar[u,swap,"\kappa"]
\end{tikzcd}
\end{equation*}
is a commutative diagram. Given that $Z$ is a well partial order, the map~$f$ ensures that~$Y$ is a well order. Now~(i) of Theorem~\ref{thm:main} follows via the cited result from~\cite{FR_Pi11-recursion}. It may be interesting to observe that the given argument involves `abstract versions' of classical results from ordinal analysis: First, the partial inverse~$\psi:\nu\times D(Y)\to Y$ of~$\pi$ is intimately connected to the proof theoretic collapsing functions of W.~Buchholz~\cite{buchholz-new-system}. Secondly, the proof in~\cite{FR_Pi11-recursion} relativizes the ordinal analysis of $\Pi^1_1$-recursion, for which we refer to~\cite{buchholz-local-predicativity,bfps-inductive,jaeger-pohlers82,Rathjen_PhD_2013}. Finally, the embedding $f:Y\to X$ corresponds to linearizations of Friedman's gap condition on trees by traditional ordinal notation systems, as given in~\cite{buchholz_perspicious,simpson85}.

To conclude, we sketch the proof that~(ii) implies~(i) in Theorem~\ref{thm:main}. As we will see, statement~(i) follows from a result in~\cite{kriz-conjecture}, which we will call `K\v{r}\'{i}\v{z}'s minimality principle' (see Section~\ref{sect:kriz-minimal} for the precise statement). This principle is an existence result for minimal bad sequences in the context of gap embeddings. In a posting to the `Foundations of Mathematics' mailing list, Friedman has suggested that K\v{r}\'{i}\v{z}'s proof ``uses something like light faced $\Pi^1_2\text{-}\mathsf{CA}_0$"~\cite{friedman-polemics} (it ostensibly uses an ordinal with cofinality above~$\omega$). We will refine the proof to show that $\Pi^1_1$-recursion implies K\v{r}\'{i}\v{z}'s minimality principle and thus the uniform Kruskal-Friedman theorem. Note that both implications are equivalences, as the circle is closed by Theorem~\ref{thm:main}.

\section{Existence of Kruskal fixed points}\label{sect:fp-exist}

In this section, we prove the existence of initial $\nu$-Kruskal fixed points via an explicit construction. We also provide a criterion that helps to decide whether a given fixed point is initial. Unless otherwise noted, we work over the base \mbox{theory $\rca_0$}. The presentation in this section is similar to the one in \cite{frw-kruskal} and~\cite{freund-kruskal-gap}, where the analogous results for~$\nu=1$ are proved.

We begin with a definition and lemma that will be used to approximate $\nu$-Kruskal fixed points via their finite suborders.

\begin{definition}
Given a $\po$-dilator~$W$ and a finite partial order~$a$, we put
\begin{equation*}
W_0(a):=\{\sigma\in W(a)\,|\,\supp_a(\sigma)=a\}.
\end{equation*}
\end{definition}

Let us point out that the given definition is closely related to the notion of trace, which is due to Girard~\cite{girard-pi2} for linear orders and was transferred to partial orders in~\cite{frw-kruskal}. The following can be seen as a variant of Girard's normal form theorem.

\begin{lemma}\label{lem:normal-form}
For each partial order~$X$ and each $\sigma\in W(X)$, there are unique $a\in[X]^{<\omega}$ and $\sigma_0\in W_0(a)$ with $\sigma=W(\iota)(\sigma_0)$, where $\iota:a\hookrightarrow X$ is the inclusion.
\end{lemma}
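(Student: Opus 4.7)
The plan is to let $a := \supp_X(\sigma)$ and show this choice works and is forced. The existence half uses the support condition directly; the uniqueness half uses naturality of $\supp$ together with the fact that $W$ preserves embeddings.

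For existence, I would set $a := \supp_X(\sigma) \in [X]^{<\omega}$ and let $\iota : a \hookrightarrow X$ be the inclusion, which is an embedding. Since $\supp_X(\sigma) = a = \rng(\iota)$, the support condition in Definition~\ref{def:po-dilator} gives some $\sigma_0 \in W(a)$ with $\sigma = W(\iota)(\sigma_0)$. To verify $\sigma_0 \in W_0(a)$, I apply naturality of $\supp$:
\begin{equation*}
a = \supp_X(\sigma) = \supp_X(W(\iota)(\sigma_0)) = [\iota]^{<\omega}(\supp_a(\sigma_0)) = \supp_a(\sigma_0),
\end{equation*}
where the last equality holds because $\iota$ is an inclusion. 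Hence $\supp_a(\sigma_0) = a$, as required.

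For uniqueness, suppose $a' \in [X]^{<\omega}$ and $\sigma_0' \in W_0(a')$ also satisfy $\sigma = W(\iota')(\sigma_0')$ for the inclusion $\iota' : a' \hookrightarrow X$. Applying naturality of $\supp$ once more gives
\begin{equation*}
\supp_X(\sigma) = [\iota']^{<\omega}(\supp_{a'}(\sigma_0')) = \supp_{a'}(\sigma_0') = a',
\end{equation*}
so $a' = \supp_X(\sigma) = a$. Now both $\sigma_0$ and $\sigma_0'$ lie in $W(a)$ and satisfy $W(\iota)(\sigma_0) = \sigma = W(\iota)(\sigma_0')$. Since $\iota$ is an embedding and $W$ preserves embeddings, $W(\iota)$ is an embedding and hence injective, so $\sigma_0 = \sigma_0'$.

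There is no genuine obstacle here: the statement is essentially a repackaging of the support condition, with the only subtlety being the recognition that $W$-images of embeddings are injective (which is built into Definition~\ref{def:po-dilator}) and that naturality of $\supp$ pins $a$ down uniquely. The one thing to be careful about is distinguishing the support computed in $W(X)$ from that computed in $W(a)$; both calculations above rely on the commutative square $\supp_X \circ W(\iota) = [\iota]^{<\omega} \circ \supp_a$ provided by the natural transformation.
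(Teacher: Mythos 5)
Your proposal is correct and follows essentially the same route as the paper's proof: existence by taking $a:=\supp_X(\sigma)$ and applying the support condition, with naturality of $\supp$ showing $\sigma_0\in W_0(a)$; uniqueness by using naturality to force $a=\supp_X(\sigma)$ and then injectivity of the embedding $W(\iota)$ to force $\sigma_0$. No gaps to report.
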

\begin{proof}
To obtain uniqueness, note that $\sigma=W(\iota)(\sigma_0)$ with $\sigma_0\in W_0(a)$ entails
\begin{equation*}
\supp_X(\sigma)={\supp_X}\circ W(\iota)(\sigma_0)=[\iota]^{<\omega}\circ\supp_a(\sigma_0)=a,
\end{equation*}
due to the naturality of supports. Once~$a$ is determined, so is $\sigma_0$, as the embedding $W(\iota)$ is injective. For existence, set $a:=\supp_X(\sigma)$ and use the support condition from Definition~\ref{def:po-dilator} to get $\sigma=W(\iota)(\sigma_0)$ for some $\sigma_0\in W(a)$. We have $\sigma_0\in W_0(a)$ by the same computation as in the uniqueness part.
\end{proof}

The following definition yields an initial $\nu$-Kruskal fixed point of~$W$, as we will prove below. It extends the construction for~$\nu=1$ given in~\cite[Definition~3.4]{frw-kruskal}. To allow for applications of $W$, the definition requires that the arising relation is a partial order on certain subsets. In hindsight, this requirement turns out to be redundant: we will see that we get a partial order on the entire fixed point.

\begin{definition}\label{def:TW-nu}
For an ordinal~$\nu$ and a normal~$\po$-dilator~$W$, we use simultaneous recursion to generate a set $\mathcal T W[\nu]$ of terms and a binary relation $\leq_{\mathcal TW[\nu]}$ on it:
\begin{itemize}[itemsep=.5ex]
\item Whenever we have constructed a finite set $a\subseteq\mathcal TW[\nu]$ that is partially ordered by the relation~$\leq_{\mathcal TW[\nu]}$, we add a term $\alpha\star(a,\sigma)\in\mathcal TW[\nu]$ for each ordinal $\alpha<\nu$ and each element $\sigma\in W_0(a)$.
\item The relation $\alpha\star(a,\sigma)\leq_{\mathcal TW[\nu]}\beta\star(b,\tau)$ holds precisely when we have $\alpha\leq\beta$ and one of the following conditions is satisfied:
\begin{enumerate}[label=(\roman*),itemsep=.5ex,topsep=.5ex]
\item the set $a\cup b$ is partially ordered by~$\leq_{\mathcal TW[\nu]}$ and we have
\begin{equation*}
W(\iota_a)(\sigma)\leq_{W(a\cup b)} W(\iota_b)(\tau)
\end{equation*}
for the inclusions $\iota_a:a\hookrightarrow a\cup b$ and $\iota_b:b\hookrightarrow a\cup b$,
\item we have $\alpha\star(a,\sigma)\leq_{\mathcal TW[\nu]} r$ for some~$r\in b$.
\end{enumerate}
\end{itemize}
\end{definition}

More explicitly, the given clauses decide $r\in\mathcal T W[\nu]$ and $s\leq_{\mathcal T W[\nu]}t$ by simultaneous recursion on~$l(r)$ and $l(s)+l(t)$, respectively, for the length function
\begin{equation*}
l:\mathcal T W[\nu]\to\mathbb N\quad\text{with}\quad l(\alpha\star(a,\sigma)):=1+\textstyle\sum_{s\in a}2\cdot l(s).
\end{equation*}
To prove that the relation $\leq_{\mathcal TW}$ is a partial order, we first show how it relates to the height function
\begin{equation*}
h:\mathcal T W[\nu]\to\mathbb N\quad\text{with}\quad h(\alpha\star(a,\sigma)):=\sup\{h(s)+1\,|\,s\in a\}.
\end{equation*}
As indicated in the introduction, the assumption that~$W$ is normal is crucial for the following argument.

\begin{lemma}\label{lem:heights}
From $s\leq_{\mathcal TW[\nu]}t$ we get $h(s)\leq h(t)$.
\end{lemma}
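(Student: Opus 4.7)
The plan is to proceed by induction on $l(s)+l(t)$, where $l$ is the length function introduced just before the lemma, and to split on the two clauses of Definition~\ref{def:TW-nu} by which the inequality $s\leq_{\mathcal TW[\nu]}t$ can hold. Write $s=\alpha\star(a,\sigma)$ and $t=\beta\star(b,\tau)$ throughout, and note for the length bookkeeping that $l(s')<l(s)$ for every $s'\in a$ and $l(r)<l(t)$ for every $r\in b$, since $l$ adds $2\cdot l(\cdot)$ per immediate subterm.

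If clause~(ii) applies, there is some $r\in b$ with $s\leq_{\mathcal TW[\nu]}r$. The induction hypothesis gives $h(s)\leq h(r)$, and directly from the definition of $h$ we have $h(r)+1\leq h(t)$, so even $h(s)<h(t)$ follows. If clause~(i) applies, then $W(\iota_a)(\sigma)\leq_{W(a\cup b)}W(\iota_b)(\tau)$. Here normality (Definition~\ref{def:normal}) is the key ingredient: it yields
\begin{equation*}
\supp_{a\cup b}(W(\iota_a)(\sigma))\leqf_{a\cup b}\supp_{a\cup b}(W(\iota_b)(\tau)).
\end{equation*}
By naturality of $\supp$ together with $\sigma\in W_0(a)$ and $\tau\in W_0(b)$, the two supports simplify to $a$ and $b$ respectively, so $a\leqf_{a\cup b}b$. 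Thus for every $s'\in a$ there exists $t'\in b$ with $s'\leq_{\mathcal TW[\nu]}t'$; the induction hypothesis (which applies because $l(s')+l(t')<l(s)+l(t)$) gives $h(s')\leq h(t')$, and hence $h(s')+1\leq h(t')+1\leq h(t)$. Taking the supremum over $s'\in a$ yields $h(s)\leq h(t)$, as required. The case $a=\emptyset$ is subsumed, the supremum being $0$.

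I do not expect a real obstacle here: normality was essentially designed to produce the index-matching function implicit in ``$a\leqf b$'', and the length function $l$ was arranged (with the factor $2$) precisely so that the recursion underlying case (i) strictly decreases $l(s)+l(t)$. The only mild subtlety is ensuring that in clause~(i) the set $a\cup b$ really carries a partial order so that $W(\iota_a)$ and $W(\iota_b)$ are defined, but this is built into clause~(i) as a hypothesis and therefore available whenever the inductive step has to be carried out.
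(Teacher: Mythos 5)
Your proof is correct and follows essentially the same route as the paper: induction on $l(s)+l(t)$, the clause~(ii) case handled directly via the induction hypothesis and the definition of $h$, and the clause~(i) case using normality together with $\sigma\in W_0(a)$, $\tau\in W_0(b)$ to obtain $a\leqf b$ and then matching each $s'\in a$ with some $t'\in b$. No gaps.
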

\begin{proof}
We argue by induction on~$l(s)+l(t)$. Write $s=\alpha\star(a,\sigma)$ and $t=\beta\star(b,\tau)$. If $s\leq_{\mathcal TW[\nu]} t$ holds by clause~(ii) from Definition~\ref{def:TW-nu}, we get $h(s)\leq h(r)<h(t)$ by induction. If it holds by clause~(i), then normality as in Definition~\ref{def:normal} yields
\begin{equation*}
a={\supp_{a\cup b}}\circ W(\iota_a)(\sigma)\leqf_{\mathcal TW[\nu]}{\supp_{a\cup b}}\circ W(\iota_b)(\tau)=b.
\end{equation*}
Here the equalities rely on $\sigma\in W_0(a)$ and $\tau\in W_0(b)$ due to the definition of $\mathcal TW[\nu]$. For any $s'\in a$ we thus find a $t'\in b$ with $s'\leq_{\mathcal TW[\nu]}t'$. By induction hypothesis this gives $h(s')\leq h(t')<h(t)$. As $s'\in a$ was arbitrary, we get $h(s)\leq h(t)$.
\end{proof}

The previous lemma ensures that clause~(ii) of Definition~\ref{def:TW-nu} cannot `break' antisymmetry. Indeed, if $s\leq_{\mathcal TW[\nu]}t$ holds by said clause, we get $h(s)<h(t)$ by the proof of the lemma. But then the latter excludes $t\leq_{\mathcal TW[\nu]} s$. Apart from this observation -- which motivates the notion of normality -- we do not provide a proof of the following result: instead we refer to the proof of~\cite[Proposition~3.6]{frw-kruskal}, which treats the case~$\nu=1$ and generalizes without essential changes.

\begin{lemma}
The relation $\leq_{\mathcal TW[\nu]}$ is a partial order on~$\mathcal TW[\nu]$.
\end{lemma}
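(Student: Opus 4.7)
I would proceed by simultaneous induction on the length function $l$: for each $n\in\mathbb{N}$, verify that $\leq_{\mathcal TW[\nu]}$ restricted to $\{r\in\mathcal TW[\nu]\mid l(r)\leq n\}$ is a partial order. This matches the simultaneous recursion by which $\mathcal TW[\nu]$ and $\leq_{\mathcal TW[\nu]}$ are generated, and also justifies any use of $W$ applied to a finite suborder occurring in the definition. \emph{Reflexivity} on $s=\alpha\star(a,\sigma)$ is immediate from clause~(i) with $b=a$: by inductive hypothesis $a$ is already partially ordered, the two inclusions coincide with the identity on $a$, and the required inequality reduces to $\sigma\leq_{W(a)}\sigma$.

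For \emph{antisymmetry}, suppose $s\leq_{\mathcal TW[\nu]}t$ and $t\leq_{\mathcal TW[\nu]}s$ with $s=\alpha\star(a,\sigma)$ and $t=\beta\star(b,\tau)$. Lemma~\ref{lem:heights} yields $h(s)=h(t)$. A closer look at its proof shows that any use of clause~(ii) in either direction forces a strict height drop, so both comparisons must come from clause~(i). This gives $\alpha=\beta$ together with $W(\iota_a)(\sigma)\leq_{W(a\cup b)}W(\iota_b)(\tau)$ and its reverse; antisymmetry in the partial order $W(a\cup b)$ collapses these to an equality, naturality of supports then forces $a=b$, and injectivity of the embedding $W(\iota_a)$ finally delivers $\sigma=\tau$.

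For \emph{transitivity}, I assume $s\leq_{\mathcal TW[\nu]}t\leq_{\mathcal TW[\nu]}u$ with $t=\beta\star(b,\tau)$ and $u=\gamma\star(c,\upsilon)$, and split on the clauses used. Whenever the second comparison is via clause~(ii), I extract $r'\in c$ with $t\leq_{\mathcal TW[\nu]}r'$ and invoke the inductive hypothesis to conclude $s\leq_{\mathcal TW[\nu]}r'$, hence $s\leq_{\mathcal TW[\nu]}u$ via clause~(ii). When the first uses~(ii) and the second~(i), the normality argument already used in the proof of Lemma~\ref{lem:heights} transfers the witness $r\in b$ for $s\leq_{\mathcal TW[\nu]}r$ to some $r'\in c$ with $r\leq_{\mathcal TW[\nu]}r'$, reducing again to the inductive hypothesis. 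The genuinely subtle case is (i,i): I would extend both inequalities along $W$ of the canonical inclusions into the common refinement $a\cup b\cup c$ (which is partially ordered by the inductive hypothesis) and apply transitivity in $W(a\cup b\cup c)$ to get a single comparison. Its two sides have supports contained in $a$ and $c$, respectively, so the support condition in Definition~\ref{def:po-dilator} places the inequality in the range of $W$ applied to the inclusion $a\cup c\hookrightarrow a\cup b\cup c$; since $W$ preserves embeddings, the inequality descends to $W(a\cup c)$, which is precisely clause~(i) for $s\leq_{\mathcal TW[\nu]}u$.

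The main obstacle I anticipate is exactly this descent in the (i,i) case of transitivity. One must juggle the four finite suborders $a\cup b$, $b\cup c$, $a\cup c$ and $a\cup b\cup c$ simultaneously, invoking functoriality, naturality of supports, and preservation of embeddings in close succession, all while verifying at each stage that the suborders in question have already been shown to be partially ordered so that $W$ may legitimately be applied. The work is not deep but demands careful bookkeeping, and it is the point at which the full strength of Definition~\ref{def:po-dilator} is used.
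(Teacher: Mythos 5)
Your plan is correct, and it reconstructs precisely the argument that the paper chooses not to spell out: the paper only records the observation that Lemma~\ref{lem:heights} forces a strict height drop under clause~(ii), so that clause cannot violate antisymmetry, and for the rest it refers to the case $\nu=1$ in \cite{frw-kruskal}, which is said to generalize without essential changes. Your antisymmetry step is exactly the paper's remark (both comparisons must use clause~(i), whence antisymmetry in $W(a\cup b)$, supports, and injectivity of $W(\iota_a)$ give $a=b$, $\sigma=\tau$, $\alpha=\beta$), and your transitivity cases -- normality transferring the clause-(ii) witness in the mixed case, and passage through $W(a\cup b\cup c)$ in the (i,i) case -- follow the same route as the cited proof. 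Two small refinements: run the induction on the combined length $l(s)+l(t)+l(u)$ (matching the recursion on $l(s)+l(t)$ by which the relation itself is defined) rather than stratifying by a bound $n$ on individual lengths, since in the cases where the second comparison uses clause~(ii) you apply transitivity to the triple $(s,t,r')$ with $r'\in c$, which lies at the same level $n$ but has strictly smaller combined length; and in the (i,i) descent the support condition is not actually needed, because functoriality already exhibits $W(\iota^{a\cup b\cup c}_a)(\sigma)$ and $W(\iota^{a\cup b\cup c}_c)(\upsilon)$ as the images under $W$ of the inclusion $a\cup c\hookrightarrow a\cup b\cup c$ of $W(\iota^{a\cup c}_a)(\sigma)$ and $W(\iota^{a\cup c}_c)(\upsilon)$, so order reflection of that morphism of $\po$ yields clause~(i) for $s\leq_{\mathcal TW[\nu]} u$ directly (the preconditions that $a\cup b$, $b\cup c$ are partial orders are built into clause~(i), and the induction hypothesis covers $a\cup b\cup c$ and $a\cup c$).
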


We now come to a crucial construction, which is justified by Lemma~\ref{lem:normal-form}.

\begin{definition}\label{def:kappa}
Given an ordinal~$\nu$ and a normal $\po$-dilator~$W$, we define
\begin{equation*}
\kappa_W^\nu:\nu\times W(\mathcal TW[\nu])\to\mathcal TW[\nu]
\end{equation*}
by stipulating $\kappa_W^\nu(\alpha,\sigma)=\alpha\star(a,\sigma_0)$ for $\sigma=W(\iota)(\sigma_0)$ with $\sigma_0\in W_0(a)$, where we write $\iota:a\hookrightarrow\mathcal TW[\nu]$ for the inclusion.
\end{definition}

Let us show that our definition yields the desired result:

\begin{proposition}\label{prop:TWnu-fp}
The partial order~$\mathcal TW[\nu]$ and the function $\kappa_W^\nu$ form a $\nu$-Kruskal fixed point of~$W$, whenever the latter is a normal $\po$-dilator.
\end{proposition}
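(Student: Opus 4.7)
The plan is to verify the two requirements of Definition~\ref{def:nu-FP}: that $\kappa_W^\nu$ is a bijection, and that it satisfies the stipulated order equivalence. The preceding (unlabelled) lemma has already upgraded $\leq_{\mathcal T W[\nu]}$ to a partial order on the whole set $\mathcal T W[\nu]$, so Lemma~\ref{lem:normal-form} is available with $X=\mathcal T W[\nu]$, and this is what makes $\kappa_W^\nu$ a legitimate function in the first place.

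First I would use Lemma~\ref{lem:normal-form} to handle well-definedness and bijectivity in one stroke. Each $\sigma\in W(\mathcal T W[\nu])$ admits a unique normal form $\sigma=W(\iota)(\sigma_0)$ with $a:=\supp_{\mathcal T W[\nu]}(\sigma)$ and $\sigma_0\in W_0(a)$, so $\kappa_W^\nu(\alpha,\sigma)=\alpha\star(a,\sigma_0)$ is unambiguous. Surjectivity is immediate, since every element of $\mathcal T W[\nu]$ has the form $\alpha\star(a,\sigma_0)=\kappa_W^\nu(\alpha,W(\iota)(\sigma_0))$ by construction, and injectivity reduces to the uniqueness clause of Lemma~\ref{lem:normal-form} once the first coordinate has been matched.

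Next I would verify the order equivalence by translating Definition~\ref{def:TW-nu} through $\kappa_W^\nu$. Writing $s=\kappa_W^\nu(\alpha,\sigma)=\alpha\star(a,\sigma_0)$ and $t=\kappa_W^\nu(\beta,\tau)=\beta\star(b,\tau_0)$, I would factor the ambient inclusions $\iota_a':a\hookrightarrow\mathcal T W[\nu]$ and $\iota_b':b\hookrightarrow\mathcal T W[\nu]$ as $\iota_a'=j\circ\iota_a$ and $\iota_b'=j\circ\iota_b$ with $\iota_a:a\hookrightarrow a\cup b$, $\iota_b:b\hookrightarrow a\cup b$ and $j:a\cup b\hookrightarrow\mathcal T W[\nu]$. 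Since $W$ preserves embeddings, $W(j)$ is order reflecting, so
\begin{equation*}
\sigma\leq_{W(\mathcal T W[\nu])}\tau\quad\Leftrightarrow\quad W(\iota_a)(\sigma_0)\leq_{W(a\cup b)}W(\iota_b)(\tau_0),
\end{equation*}
which matches clause~(i) of Definition~\ref{def:TW-nu} with the first disjunct in Definition~\ref{def:nu-FP}; the proviso that $a\cup b$ be partially ordered is automatic by the preceding lemma. Naturality of supports together with $\tau_0\in W_0(b)$ gives $\supp_{\mathcal T W[\nu]}(\tau)=[\iota_b']^{<\omega}(\supp_b(\tau_0))=b$, so clause~(ii) of Definition~\ref{def:TW-nu} translates literally into $s\leqf_{\mathcal T W[\nu]}\supp_{\mathcal T W[\nu]}(\tau)$. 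The conjunct $\alpha\leq\beta$ appears on both sides with the same meaning and requires no work.

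I do not expect a serious obstacle: the heavy lifting has already been carried out by Lemma~\ref{lem:heights}, the antisymmetry remark that follows it, and the preceding lemma. What remains is a careful bookkeeping exercise that re-expresses the recursive clauses of Definition~\ref{def:TW-nu} in the abstract language of Definition~\ref{def:nu-FP}. If a subtlety is to be flagged, it is the consistent use of Lemma~\ref{lem:normal-form} when moving between an element of $W(\mathcal T W[\nu])$ and its support-carrying preimage in $W_0(a)$; the support condition from Definition~\ref{def:po-dilator} is used implicitly whenever such a normal form is invoked.
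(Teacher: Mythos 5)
Your proposal is correct and follows essentially the same route as the paper: bijectivity via the normal form of Lemma~\ref{lem:normal-form}, computation of $\supp_{\mathcal TW[\nu]}(\tau)=b$ by naturality, and translation of the clauses of Definition~\ref{def:TW-nu} into the equivalence of Definition~\ref{def:nu-FP} through the factored inclusions. One tiny point of wording: for the biconditional $\sigma\leq_{W(\mathcal TW[\nu])}\tau\Leftrightarrow W(\iota_a)(\sigma_0)\leq_{W(a\cup b)}W(\iota_b)(\tau_0)$ you need $W(j)$ to be an \emph{embedding} (order preserving and order reflecting), which is exactly what the embedding-preservation clause of Definition~\ref{def:po-dilator} supplies, not merely order reflection.
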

\begin{proof}
First note that $\kappa_W^\nu$ has an obvious inverse and is thus bijective. To establish the equivalence from Definition~\ref{def:nu-FP}, we look at two values
\begin{equation*}
\kappa_W^\nu(\alpha,\sigma)=\alpha\star(a,\sigma_0)\quad\text{and}\quad\kappa_W^\nu(\beta,\tau)=\beta\star(b,\tau_0).
\end{equation*}
Consider the inclusions $\iota_a:a\hookrightarrow a\cup b$ and $\iota_b:b\hookrightarrow a\cup b$ as well as $\iota:a\cup b\hookrightarrow\mathcal TW[\nu]$. According to the definition of~$\kappa_W^\nu$, we have
\begin{equation*}
\sigma=W(\iota\circ\iota_a)(\sigma_0)\quad\text{and}\quad\tau=W(\iota\circ\iota_b)(\tau_0).
\end{equation*}
In view of $\tau_0\in W_0(b)$ we get $\supp_{\mathcal TW[\nu]}(\tau)=b$, as in the proof of Lemma~\ref{lem:normal-form}. Given that $W(\iota)$ is an embedding, the desired equivalence from Definition~\ref{def:nu-FP} amounts to
\begin{multline*}
\alpha\star(a,\sigma_0)\leq_{\mathcal TW[\nu]}\beta\star(b,\tau_0)\quad\Leftrightarrow{}\\
\alpha\leq\beta\text{ and }\left(\,W(\iota_a)(\sigma_0)\leq_{W(a\cup b)}W(\iota_b)(\tau_0)\text{ or }\kappa(\alpha,\sigma)\leqf_{\mathcal TW[\nu]}b\,\right).
\end{multline*}
This coincides with the recursive characterization of $\leq_{\mathcal TW[\nu]}$ in Definition~\ref{def:TW-nu}.
\end{proof}

In the following result, the equivalence between~(i) and~(ii) is the criterion for initial fixed points that was promised at the beginning of this section. In particular, this criterion can be used to confirm that the uniform Kruskal-Friedman theorem entails the original version for trees, as noted in the introduction. Statement~(iii) will allow us to relate Kruskal fixed points for different ordinals. A similar criterion for the case~$\nu=1$ is provided by~\cite[Theorem~3.5]{freund-kruskal-gap}

\begin{theorem}\label{thm:initial-criterion}
Assume that $X$ and $\kappa:\nu\times W(X)\to X$ form a $\nu$-Kruskal fixed point of a normal $\po$-dilator~$W$. The following are equivalent:
\begin{enumerate}[label=(\roman*)]
\item The $\nu$-Kruskal fixed point $(X,\kappa)$ is initial.
\item There is a function $h:X\to\mathbb N$ such that we have
\begin{equation*}
h(x)<h(\kappa(\alpha,\sigma))\quad\text{whenever}\quad x\in\supp_X(\sigma).
\end{equation*}
\item For any embedding $I:\nu\to\mu$ and any $\mu$-Kruskal fixed point $(Y,\pi)$ of~$W$, there is a unique quasi embedding $f:X\to Y$ such that the diagram
\begin{equation*}
\begin{tikzcd}
\nu\times W(X)\arrow[r,"\kappa"]\arrow[d,swap,"I\times W(f)"] & X\arrow[d,"f"]\\
\mu\times W(Y)\arrow[r,"\pi"] & Y
\end{tikzcd}
\end{equation*}
commutes, where $I\times W(f)$ maps $(\alpha,\sigma)$ to $(I(\alpha),W(f)(\sigma))$.
\end{enumerate}
Furthermore, the unique~$f$ in~(iii) is always an embedding.
\end{theorem}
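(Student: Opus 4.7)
The implication (iii) $\Rightarrow$ (i) is immediate: instantiate (iii) with $\mu=\nu$ and $I$ the identity. For (i) $\Rightarrow$ (ii), I would feed the canonical $\nu$-Kruskal fixed point $(\mathcal{T}W[\nu],\kappa_W^\nu)$ from Proposition~\ref{prop:TWnu-fp} into initiality, producing a quasi embedding $f\colon X\to\mathcal{T}W[\nu]$ that commutes with the structure maps. Setting $h(x):=h_0(f(x))$, with $h_0$ the height on $\mathcal{T}W[\nu]$ introduced before Lemma~\ref{lem:heights}, yields the desired function: for $x'\in\supp_X(\sigma)$ and $x=\kappa(\alpha,\sigma)$, naturality of supports places $f(x')$ inside $\supp_{\mathcal{T}W[\nu]}(W(f)(\sigma))$, and the recursive shape of $\kappa_W^\nu$ together with the definition of $h_0$ then yields $h_0(f(x'))<h_0(f(\kappa(\alpha,\sigma)))$.

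The substantial direction is (ii) $\Rightarrow$ (iii). Starting from $h$ as in (ii), I would first pass to the canonical height $h^*$ defined by recursion on $h$ via $h^*(\kappa(\alpha,\sigma)):=\sup\{h^*(x')+1\,|\,x'\in\supp_X(\sigma)\}$. Writing $X_n:=\{x\in X\,|\,h^*(x)\leq n\}$, I would build $f\colon X\to Y$ stage by stage: for $x=\kappa(\alpha,\sigma)$ of canonical height $n+1$ and the unique $\sigma_0\in W_0(\supp_X(\sigma))$ from Lemma~\ref{lem:normal-form}, set $f(x):=\pi(I(\alpha),W(f|_{\supp_X(\sigma)})(\sigma_0))$. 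For $W$ to be applicable to the restriction of $f$ to $\supp_X(\sigma)\subseteq X_n$, that restriction must already be a quasi embedding, so the definition and this property have to be established in tandem.

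The main obstacle is resolved by a single induction on $n$ proving three statements in parallel: that $f$ is defined on $X_n$, that $f|_{X_n}$ is a quasi embedding, and the height bound $(\star)_n$ stating that $f(x)\leq_Y f(y)$ with $y\in X_n$ and $f(x)$ defined forces $h^*(x)\leq h^*(y)$. In the inductive step I would extend $f$ to $X_{n+1}$, then establish $(\star)_{n+1}$, and finally the quasi-embedding property on $X_{n+1}$. The bound $(\star)_{n+1}$ is the crucial intermediate step and the only place normality of $W$ enters: from $f(x)\leq f(y)$ with $h^*(y)=n+1$ the $(Y,\pi)$-equivalence yields either an inequality $W(f|_a)(\sigma_0)\leq_{W(Y)}W(f|_b)(\tau_0)$, in which normality forces $f(a)\leqf_Y f(b)$ and $(\star)_n$ applied to each $u\in a$ propagates the bound up to $x$, or an element $y'\in\supp_X(\tau)$ with $f(x)\leq f(y')$, to which $(\star)_n$ applies directly. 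With $(\star)_{n+1}$ in hand the quasi-embedding step on $X_{n+1}$ is routine: in the first of these cases the finite suborder $c=\supp_X(\sigma)\cup\supp_X(\tau)\subseteq X_n$ carries the comparison, where the inductive quasi-embedding property is already available; in the second case $(\star)_n$ forces $x\in X_n$, so the pair $(x,y')$ again lies inside the inductive hypothesis.

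Commutativity in (iii) is immediate from the construction, and uniqueness follows by another induction on $h^*$: any competing $g$ is itself a quasi embedding, so $W(g|_a)$ makes sense and equals $W(f|_a)$ by the inductive equality $g|_a=f|_a$. For the closing assertion that $f$ is always an embedding I would prove $x\leq_X y\Rightarrow f(x)\leq_Y f(y)$ by induction on $h^*(y)$: the $x\leqf_X\supp_X(\tau)$ clause of the $(X,\kappa)$-equivalence passes through the induction hypothesis directly, while the $\sigma\leq_{W(X)}\tau$ clause uses normality (via $\supp_X(\sigma)\leqf_X\supp_X(\tau)$ and the induction hypothesis, which forces $h^*(x)\leq h^*(y)$) to confine $c=\supp_X(\sigma)\cup\supp_X(\tau)$ to $X_{h^*(y)-1}$; the inductive order-preservation together with the already-established quasi-embedding property makes $f|_c$ an embedding, and functoriality of $W$ then lifts $\sigma\leq_{W(X)}\tau$ to the required inequality in $W(Y)$.
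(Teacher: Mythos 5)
Your treatment of (iii)~$\Rightarrow$~(i) and (i)~$\Rightarrow$~(ii) coincides with the paper's (composition with the canonical fixed point $(\mathcal TW[\nu],\kappa_W^\nu)$ and the height on terms), and your uniqueness argument in (ii)~$\Rightarrow$~(iii) is the paper's as well. For the existence part of (ii)~$\Rightarrow$~(iii) you take a genuinely different route. The paper defines a length function $l(\kappa(\alpha,\sigma))=1+\sum_{x\in\supp_X(\sigma)}2\cdot l(x)$ and constructs $f$ by recursion on $l(x)$ while proving the full equivalence $x_0\leq_X x_1\Leftrightarrow f(x_0)\leq_Y f(x_1)$ by a single simultaneous induction on $l(x_0)+l(x_1)$; there the case in which the comparison drops into a support (the second disjunct of Definition~\ref{def:nu-FP}) is simply a smaller instance of the same induction, so no auxiliary height bound and, notably, no appeal to normality is needed in this direction, while the factor~$2$ guarantees that all pairs in $\supp_X(\sigma)\cup\supp_X(\tau)$ are covered by the induction hypothesis. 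You instead stratify $X$ by the canonical height $h^*$, build $f$ level by level, prove only order reflection during the main induction, and compensate for the `support case' with the bound $(\star)$, which is where normality enters; order preservation (the `furthermore' clause) is then recovered afterwards by a second normality-based induction. This is sound and arguably isolates nicely where normality and the supports do work, at the price of a more involved bookkeeping.

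One point in that bookkeeping needs repair as written. In the proof of $(\star)_{n+1}$, case of an element $y'\in\supp_X(\tau)$ with $f(x)\leq_Y f(y')$, you invoke ``$(\star)_n$ directly'' for the pair $(x,y')$; likewise in the second case of the quasi-embedding step you cite $(\star)_n$ for such a pair. But the element $x$ there ranges over $X_{n+1}$, whereas the statement $(\star)_n$ established at stage~$n$ only quantifies over those $x$ at which $f$ was defined at that stage, i.e.\ over $X_n$; for $x\in X_{n+1}\setminus X_n$ it says nothing. The fix is easy and uses only ideas already in your argument: after extending $f$ to $X_{n+1}$, prove the bound for all $x\in X_{n+1}$ and $y\in X_{n+1}$ by an inner induction on $h^*(y)$ (the base case $\supp_X(\tau)=\emptyset$ uses normality in $W(Y)$ to force $\supp_X(\sigma)=\emptyset$; in both inductive cases the relevant second components $v$, respectively $y'$, have strictly smaller canonical height, so the inner hypothesis applies with $x$ unrestricted). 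With the bound so established, the quasi-embedding step can cite $(\star)_{n+1}$ for the pair $(x,y')$, and the rest of your argument, including the final embedding claim, goes through.
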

\begin{proof}
First, a glance at Definition~\ref{def:nu-FP} reveals that (iii) entails~(i) as the special case where~$I$ is the identity on~$\nu=\mu$. Next, we assume~(i) and derive~(ii). In view of Proposition~\ref{prop:TWnu-fp}, we get a quasi embedding $f:X\to\mathcal TW[\nu]$ with
\begin{equation*}
f\circ\kappa=\kappa_W^\nu\circ(\nu\times W(f)).
\end{equation*}
Let $h:\mathcal TW[\nu]\to\mathbb N$ be the function defined before Lemma~\ref{lem:heights}. We show that~(ii) holds with $h\circ f:X\to\mathbb N$ at the place of~$h$. Given any $\alpha<\nu$ and $\sigma\in W(X)$, invoke Lemma~\ref{lem:normal-form} to write $W(f)(\sigma)=W(\iota)(\sigma_0)$ with $\iota:a\hookrightarrow\mathcal TW[\nu]$ and $\sigma_0\in W_0(a)$. For an arbitrary element $x\in\supp_X(\sigma)$, we get
\begin{equation*}
f(x)\in[f]^{<\omega}\circ\supp_X(\sigma)={\supp_{\mathcal TW[\nu]}}\circ W(f)(\sigma)=a,
\end{equation*}
where the last equality is established as in the proof of Lemma~\ref{lem:normal-form}. Considering~the definition of~$h$, we can conclude
\begin{equation*}
h\circ f(x)<h(\alpha\star(a,\sigma_0))=h\left(\kappa_W^\nu(\alpha,W(f)(\sigma))\right)=h\circ f(\kappa(\alpha,\sigma)),
\end{equation*}
as needed to establish~(ii) for~$h\circ f$. Finally, we assume~(ii) and deduce~(iii). To see that there is at most one~$f$ with the required property, note that an arbitrary element of~$X$ can be uniquely written as~$\kappa(\alpha,\sigma)$, since $\kappa$ is bijective by Definition~\ref{def:nu-FP}. Commutativity of the diagram in~(iii) amounts to
\begin{equation}\tag{$\star$}\label{eq:initial}
f\circ\kappa(\alpha,\sigma)=\pi(I(\alpha),W(f\circ\iota)(\sigma_0))\quad\text{for}\quad \sigma=W(\iota)(\sigma_0),
\end{equation}
where we take $\iota:a\hookrightarrow X$ and $\sigma_0\in W_0(a)$ to be determined by Lemma~\ref{lem:normal-form}. As~in the proof of the latter, we learn that the domain~$a$ of $f\circ\iota$ is equal to $\supp_X(\sigma)$. In view of~(\ref{eq:initial}), we can thus invoke induction over~$h(x)$ to see that $f(x)$ is uniquely determined. For existence, the idea is to read~(\ref{eq:initial}) as a recursive definition of~$f$. To ensure that the application of~$W$ is explained, we must simultaneously verify that $f\circ\iota$ is a (quasi) embedding. In order to make this precise, we use recursion over~$h$ to define a length function
\begin{equation*}
l:X\to\mathbb N\quad\text{with}\quad l(\kappa(\alpha,\sigma))=1+\textstyle\sum_{x\,\in\,\supp_X(\sigma)}2\cdot l(x).
\end{equation*}
One can now use~(\ref{eq:initial}) to define $f(x)$ by recursion on $l(x)$, while showing
\begin{equation*}
x_0\leq_X x_1\quad\Leftrightarrow\quad f(x_0)\leq_Y f(x_1)
\end{equation*}
by a simultaneous induction on~$l(x_0)+l(x_1)<l(x)$. This last induction is based on the fact that $\kappa$ and $\pi$ satisfy the same equivalence from Definition~\ref{def:nu-FP}. For details we refer to the proof of the case~$\nu=1$ in~\cite[Theorem~3.5]{freund-kruskal-gap}.
\end{proof}

With respect to the constructions from Definitions~\ref{def:TW-nu} and~\ref{def:kappa}, we can now strengthen Proposition~\ref{prop:TWnu-fp} as follows.

\begin{corollary}\label{cor:Kruskal-fp-exist}
The $\nu$-Kruskal fixed point $(\mathcal TW[\nu],\kappa_W^\nu)$ of $W$ is initial, for any normal $\po$-dilator~$W$.
\end{corollary}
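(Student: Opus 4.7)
The plan is to deduce the corollary from Theorem~\ref{thm:initial-criterion} by verifying the explicit criterion~(ii) of that theorem for the height function already introduced before Lemma~\ref{lem:heights}. Proposition~\ref{prop:TWnu-fp} tells us that $(\mathcal TW[\nu],\kappa_W^\nu)$ is a $\nu$-Kruskal fixed point of~$W$, so the only thing to check is the existence of a function $h:\mathcal TW[\nu]\to\mathbb N$ with $h(x)<h(\kappa_W^\nu(\alpha,\sigma))$ for every $x\in\supp_{\mathcal TW[\nu]}(\sigma)$.

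For $h$ I would take exactly the height function from the text, namely $h(\alpha\star(a,\sigma_0)):=\sup\{h(s)+1\,|\,s\in a\}$. The key computation is to unfold $\kappa_W^\nu(\alpha,\sigma)$ using Definition~\ref{def:kappa}: we write $\sigma=W(\iota)(\sigma_0)$ with $\sigma_0\in W_0(a)$ and $\iota:a\hookrightarrow\mathcal TW[\nu]$ the inclusion, so that $\kappa_W^\nu(\alpha,\sigma)=\alpha\star(a,\sigma_0)$. Then by naturality of supports together with $\supp_a(\sigma_0)=a$ (from $\sigma_0\in W_0(a)$), we get
\begin{equation*}
\supp_{\mathcal TW[\nu]}(\sigma)={\supp_{\mathcal TW[\nu]}}\circ W(\iota)(\sigma_0)=[\iota]^{<\omega}\circ\supp_a(\sigma_0)=a,
\end{equation*}
which is the same computation used in Lemma~\ref{lem:normal-form}. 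Hence any $x\in\supp_{\mathcal TW[\nu]}(\sigma)$ is an element of~$a$, and the very definition of~$h$ gives $h(x)+1\leq h(\alpha\star(a,\sigma_0))=h(\kappa_W^\nu(\alpha,\sigma))$, as required.

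This confirms clause~(ii) of Theorem~\ref{thm:initial-criterion}, so that theorem delivers initiality and also supplies the last assertion (uniqueness and embedding property of the map into any other fixed point). There is no real obstacle: the normal-form lemma and the support condition do all of the work, and the only temptation to avoid is to re-prove initiality directly by induction on~$h$ rather than quoting Theorem~\ref{thm:initial-criterion}, which would duplicate the argument already given there.
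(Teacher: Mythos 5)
Your proposal is correct and matches the paper's own proof: the paper likewise establishes initiality by verifying criterion~(ii) of Theorem~\ref{thm:initial-criterion} for the height function~$h$ defined before Lemma~\ref{lem:heights}, relying on the support computation from Lemma~\ref{lem:normal-form}. You merely spell out explicitly the computation that the paper delegates to the proof of Theorem~\ref{thm:initial-criterion}, so there is nothing to add.
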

\begin{proof}
Statement~(ii) of Theorem~\ref{thm:initial-criterion} holds for the function $h:\mathcal T W[\nu]\to\mathbb N$ defined before Lemma~\ref{lem:heights}, as shown in the proof of the theorem.
\end{proof}

As pointed out in the introduction, all initial $\nu$-Kruskal fixed points of~$W$ are isomorphic, by a general argument about universal properties: Given two initial fixed points $(X,\kappa)$ and $(Y,\pi)$, we get quasi embeddings $f$ and $g$ such that
\begin{equation*}
\begin{tikzcd}
\nu\times W(X)\arrow[r,"\kappa"]\arrow[d,swap,"\nu\times W(f)"]\arrow[dd,swap,bend right=60,start anchor=west,end anchor=west,"\nu\times W(g\circ f)"] & X\arrow[d,"f"]\arrow[dd,bend left=60,start anchor=east,end anchor=east,"g\circ f"]\\
\nu\times W(Y)\arrow[r,"\pi"]\arrow[d,swap,"\nu\times W(g)"] & Y\arrow[d,"g"]\\
\nu\times W(X)\arrow[r,"\kappa"] & X
\end{tikzcd}
\end{equation*}
is a commutative diagram. Now there can only be one quasi embedding $g\circ f$ such that the outer rectangle commutes, by the uniqueness condition in Definition~\ref{def:nu-FP}. Hence $g\circ f$ must be the identity on~$X$. In the same way we see that $f\circ g$ is the identity on~$Y$, which makes $f$ an isomorphism with inverse~$g$. Having shown existence and essential uniqueness, we may speak of `the' initial $\nu$-Kruskal fixed point of a normal $\po$-dilator~$W$. Furthermore, we can use $\mathcal TW[\nu]$ as notation for `this' fixed point, without committing to the specific construction from Definition~\ref{def:TW-nu}. The formulation of Theorem~\ref{thm:main} from the introduction is now fully explained and justified. We conclude this section with the following observation.

\begin{corollary}\label{cor:KF-monotone}
For ordinals $\nu<\mu$ and any normal $\po$-dilator~$W$, we have an order embedding of $\mathcal TW[\nu]$ into $\mathcal TW[\mu]$.
\end{corollary}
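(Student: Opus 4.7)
The plan is to apply Theorem~\ref{thm:initial-criterion} directly, using the criterion~(iii) for initial fixed points together with the ``furthermore'' clause that the resulting map is automatically an embedding. The setup for this is already in place: Corollary~\ref{cor:Kruskal-fp-exist} supplies an initial $\nu$-Kruskal fixed point $(\mathcal{T}W[\nu],\kappa_W^\nu)$, while Proposition~\ref{prop:TWnu-fp} supplies a $\mu$-Kruskal fixed point $(\mathcal{T}W[\mu],\kappa_W^\mu)$.

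Concretely, I would first observe that since $\nu<\mu$ holds as an inequality of ordinals (i.e.\ countable well orders), the identification of $\nu$ with the initial segment of $\mu$ below $\nu$ produces an order embedding $I:\nu\to\mu$. This is the first ingredient demanded by clause~(iii) of Theorem~\ref{thm:initial-criterion}. I would then invoke that clause with $X=\mathcal{T}W[\nu]$, $\kappa=\kappa_W^\nu$, $Y=\mathcal{T}W[\mu]$, and $\pi=\kappa_W^\mu$ to produce a unique quasi embedding $f:\mathcal{T}W[\nu]\to\mathcal{T}W[\mu]$ making
\begin{equation*}
\begin{tikzcd}
\nu\times W(\mathcal{T}W[\nu])\arrow[r,"\kappa_W^\nu"]\arrow[d,swap,"I\times W(f)"] & \mathcal{T}W[\nu]\arrow[d,"f"]\\
\mu\times W(\mathcal{T}W[\mu])\arrow[r,"\kappa_W^\mu"] & \mathcal{T}W[\mu]
\end{tikzcd}
\end{equation*}
commute. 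The ``furthermore'' statement appended to Theorem~\ref{thm:initial-criterion} then upgrades $f$ to an order embedding, which is precisely the conclusion of the corollary.

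There is no real obstacle here, since all the work has been done in Theorem~\ref{thm:initial-criterion}; the only thing worth remarking on is that clause~(iii) was stated in exactly this cross-ordinal generality (with an auxiliary embedding $I:\nu\to\mu$) for this kind of application. If one wished to avoid citing the ``furthermore'' clause, one could alternatively note symmetry is not available (there is no embedding $\mu\to\nu$), so one cannot argue by a two-sided universal property as in the paragraph after Corollary~\ref{cor:Kruskal-fp-exist}; this is precisely why Theorem~\ref{thm:initial-criterion} was stated with the explicit promise that the quasi embedding it produces is in fact an embedding.
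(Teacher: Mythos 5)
Your proposal is correct and follows essentially the same route as the paper: the paper also witnesses $\nu<\mu$ by an embedding $I:\nu\to\mu$ and then applies statement~(iii) of Theorem~\ref{thm:initial-criterion} (available since $(\mathcal TW[\nu],\kappa_W^\nu)$ is initial by Corollary~\ref{cor:Kruskal-fp-exist}), with the final ``furthermore'' clause yielding that the quasi embedding is in fact an embedding. No gaps.
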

\begin{proof}
The assumption $\nu<\mu$ is witnessed by an embedding~$I:\nu\to\mu$. We can conclude via statement~(iii) of Theorem~\ref{thm:initial-criterion} (which applies due to Corollary~\ref{cor:Kruskal-fp-exist}).
\end{proof}

It follows that the uniform Kruskal-Friedman theorem holds for all finite~$\nu<\omega$ if it holds for some infinite~$\nu$, as needed to infer Corollary~\ref{cor:FP-TR} from Theorem~\ref{thm:main}.

\section{K\v{r}\'{i}\v{z}'s minimality principle}\label{sect:kriz-minimal}

In this section, we derive the uniform Kruskal-Friedman theorem from a minimal bad sequence principle for structures with gap condition, which is due to~K\v{r}\'{i}\v{z}~\cite{kriz-conjecture}. We also show that this principle can be derived from $\Pi^1_1$-recursion, by presenting a modified version of K\v{r}\'{i}\v{z}'s original argument.

The following coincides with~\cite[Definition~1.4]{kriz-conjecture}, except that clause~(v) has been added (cf.~the proof of Theorem~\ref{thm:kriz-minimality} and the remark before Theorem~\ref{thm:Pi11-rec-to-Kriz}).

\begin{definition}\label{def:gap-order}
For an ordinal~$\nu$, a $\nu$-gap order consists of a quasiorder~$(X,\leq_X)$, a function $q:X\to\nu$ and a binary relation~$\ll$ on~$X$, such that the following holds:
\begin{enumerate}[label=(\roman*),itemsep=.5ex,topsep=.5ex]
\item if we have $s\leq_X t$, then we must also have $q(s)\leq q(t)$,
\item from $r\leq_X s\ll t$ and $q(r)\leq q(t)$, we can infer $r\leq_X t$,
\item given $s_0\ll\ldots\ll s_n$ with $\min_{i\leq n}q(s_i)\in\{q(s_0),q(s_n)\}$, we get $s_0\ll s_n$,
\item there is no infinite sequence $f:\mathbb N\to X$ with $f(i+1)\ll f(i)$ for all~$i\in\mathbb N$,
\item the set $\{s\in X\,|\,s\ll t\}$ is finite for each~$t\in X$.
\end{enumerate}
We shall write~$\ll^*$ for the reflexive and transitive closure of~$\ll$.
\end{definition}

Let us now show that any initial $\nu$-Kruskal fixed point (see the previous section) can be seen as $\nu$-gap order. With respect to the following definition, we point out that $\kappa:\nu\times W(X)\to X$ is a bijection by Definition~\ref{def:nu-FP}. The recursion is justified due to part~(ii) of Theorem~\ref{thm:initial-criterion} (with $\supp_X:W(X)\to[X]^{<\omega}$ as in Definition~\ref{def:po-dilator}).

\begin{definition}\label{def:fp-gap-order}
We consider a normal~$\po$-dilator~$W$ and its initial $\nu$-Kruskal fixed point~$(X,\kappa)$. Let the function $q:X\to\nu$ be given by $q(\kappa(\alpha,\sigma)):=\alpha$. For $\gamma<\nu$ we define $K^W_\gamma:X\to[X]^{<\omega}$ and $K_\gamma:W(X)\to[X]^{<\omega}$ by the recursive clauses
\begin{align*}
K^W_\gamma(\kappa(\alpha,\sigma))&:=\begin{cases}
\{\kappa(\alpha,\sigma)\}\cup K_\gamma(\sigma) & \text{if }\alpha\geq\gamma,\\
\emptyset & \text{otherwise},
\end{cases}\\
K_\gamma(\sigma)&:=\bigcup\{K^W_\gamma(r)\,|\,r\in\supp_X(\sigma)\}.
\end{align*}
To obtain a binary relation~$\ll$ on~$X$, we now declare that $\kappa(\alpha,\sigma)\ll\kappa(\beta,\tau)$ is equivalent to $\kappa(\alpha,\sigma)\in K_\gamma(\tau)$ for $\gamma=\min\{\alpha,\beta\}$.
\end{definition}

Intuitively, we have $s\ll t$ when $s$ is a proper subtree of~$t$ and the minimal label on the path between the roots of~$s$ and~$t$ occurs at one of these roots. The following extends~\cite[Lemma~1.6]{kriz-conjecture} from trees to general recursive data types.

\begin{lemma}\label{lem:fp-gap-order}
The previous definition yields a $\nu$-gap order $(X,\leq_X,q,\ll)$.
\end{lemma}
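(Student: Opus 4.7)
My plan is to verify clauses~(i)--(v) of Definition~\ref{def:gap-order} in turn, writing elements as $t=\kappa(q(t),\tau)$ (using the bijectivity of~$\kappa$ from Definition~\ref{def:nu-FP}) and freely invoking the height function $h:X\to\mathbb N$ supplied by Theorem~\ref{thm:initial-criterion}(ii). Clause~(i) is immediate from the conjunct $\alpha\leq\beta$ on the right-hand side of Definition~\ref{def:nu-FP}. For~(iv), I would first prove by induction on~$h(t)$ that $s\in K^W_\gamma(t)$ implies $h(s)\leq h(t)$: either $s=t$, or $s\in K^W_\gamma(r)$ for some $r\in\supp_X(\tau)$, and then $h(s)\leq h(r)<h(t)$ by the inductive hypothesis and the defining property of~$h$. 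In particular, $f(i+1)\ll f(i)$ yields $h(f(i+1))<h(f(i))$, which precludes an infinite descending chain. Clause~(v) then follows by defining a finite set $D(t)$ of iterated support-descendants of~$t$ by recursion on~$h(t)$ and observing that $\{s\in X\,|\,s\ll t\}\subseteq D(t)$.

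The crux of the argument is a transitivity lemma for the sets~$K^W_\gamma$: whenever $s\in K^W_\gamma(t)$ and $t\in K^W_\delta(u)$ with $\gamma\leq\delta$, one has $s\in K^W_\gamma(u)$. I would prove this by induction on~$h(u)$. Writing $u=\kappa(q(u),\upsilon)$ with $q(u)\geq\delta$, there are two cases: if $t=u$ there is nothing to show, and otherwise $t\in K^W_\delta(r)$ for some $r\in\supp_X(\upsilon)$, whence the induction hypothesis yields $s\in K^W_\gamma(r)\subseteq K_\gamma(\upsilon)\subseteq K^W_\gamma(u)$ (the last inclusion uses $q(u)\geq\delta\geq\gamma$). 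Combined with the evident monotonicity $K^W_{\gamma'}(t)\subseteq K^W_\gamma(t)$ for $\gamma\leq\gamma'$, this settles clause~(iii). Indeed, given $s_0\ll\ldots\ll s_n$ with $\gamma^{*}:=\min\{q(s_0),q(s_n)\}$ also equal to $\min_{i\leq n} q(s_i)$, each link $s_i\ll s_{i+1}$ witnesses $s_i\in K^W_{\gamma_{i+1}}(s_{i+1})$ for $\gamma_{i+1}\geq\gamma^{*}$; monotonicity upgrades this to $s_i\in K^W_{\gamma^{*}}(s_{i+1})$, and iteration of the transitivity lemma gives $s_0\in K^W_{\gamma^{*}}(s_n)$. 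Since clause~(iv) excludes $s_0=s_n$, we conclude $s_0\in K_{\gamma^{*}}(\sigma_n)$, i.\,e., $s_0\ll s_n$.

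For~(ii), the plan is to strengthen the statement and prove by induction on~$h(u)$: if $s\in K^W_\gamma(u)$, $r\leq_X s$ and $q(r)\leq\gamma$, then $r\leq_X u$. Writing $u=\kappa(q(u),\upsilon)$, either $s=u$ and we are done, or $s\in K^W_\gamma(r')$ for some $r'\in\supp_X(\upsilon)$, so the inductive hypothesis furnishes $r\leq_X r'$ and hence $r\leqf_X\supp_X(\upsilon)$; the second disjunct on the right-hand side of Definition~\ref{def:nu-FP} then delivers $r\leq_X u$, since $q(r)\leq\gamma\leq q(u)$. Applying this with $u=t$ and $\gamma=\min\{q(s),q(t)\}$ gives Definition~\ref{def:gap-order}(ii), after using clause~(i) to infer $q(r)\leq q(s)$. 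The single genuine obstacle I anticipate is the label bookkeeping in clause~(iii): it is precisely the hypothesis that the minimum label is attained at an endpoint which permits the uniform degradation of the per-step constraints~$\gamma_{i+1}$ down to the single global value~$\gamma^{*}$, and thus makes the transitivity lemma applicable throughout the chain.
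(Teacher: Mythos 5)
Your proposal is correct and follows essentially the same route as the paper: clause~(i) from the equivalence in Definition~\ref{def:nu-FP}, the height bound $s\in K^W_\gamma(t)\Rightarrow h(s)\leq h(t)$ for (iv) and (v), a transitivity/monotonicity claim for the sets $K^W_\gamma$ (the paper states it as $s\in K^W_\gamma(t)\Rightarrow K^W_\gamma(s)\subseteq K^W_\gamma(t)\subseteq K^W_\delta(t)$ for $\gamma\geq\delta$) for (iii), and the strengthened induction on $h$ for (ii). The only differences are cosmetic, e.g.\ the paper uses the finite set $K_0(\tau)$ where you introduce $D(t)$, and it excludes $s_0=s_n$ directly via $h(s_0)<h(s_n)$ rather than via clause~(iv).
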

\begin{proof}
Condition~(i) of Definition~\ref{def:gap-order} follows from the equivalence in Definition~\ref{def:nu-FP}. To show condition~(ii), we note that $r\leq_Xs\ll t$ and $q(r)\leq q(t)$ entail $q(r)\leq q(s)$ and hence $s\in K^W_\gamma(t)$ with $q(r)\leq\gamma$. So it suffices to prove
\begin{equation*}
r\leq_X s\in K^W_\gamma(t)\quad\text{and}\quad q(r)\leq\gamma\qquad\Rightarrow\qquad r\leq_X t.
\end{equation*}
We employ induction on~$h(t)$, for $h:X\to\mathbb N$ as in part~(ii) of Theorem~\ref{thm:initial-criterion}. In the non-trivial case of $s\neq t$, write $t=\kappa(q(t),\tau)$ to get $s\in K^W_\gamma(t')$ with~\mbox{$t'\in\supp_X(\tau)$}. We obtain $r\leq_X t'$ by induction hypothesis. Given $q(r)\leq q(t)$, the equivalence from Definition~\ref{def:nu-FP} yields $r\leq_X t$, as required. As preparation for the other conditions, one uses induction on~$h(t)$ to verify
\begin{equation*}
s\in K^W_\gamma(t)\quad\Rightarrow\quad h(s)\leq h(t)\text{ and } K^W_\gamma(s)\subseteq K^W_\gamma(t)\subseteq K^W_\delta(t)\text{ for }\gamma\geq\delta.
\end{equation*}
One can infer that $s\ll t$ entails $h(s)<h(t)$, which yields condition~(iv). Furthermore, it follows that any $s\ll t=\kappa(q(t),\tau)$ is contained in the finite set~$K_0(\tau)$, so that condition~(v) is satisfied. To derive condition~(iii), we write $s_i=\kappa(q(s_i),\sigma_i)$. Given $\min_{i\leq n}q(s_i)\in\{q(s_0),q(s_n)\}$, we have
\begin{equation*}
\gamma(i):=\min\{q(s_i),q(s_{i+1})\}\geq\min\{q(s_0),q(s_n)\}=:\gamma
\end{equation*}
for all~$i<n$. Assuming $s_0\ll\ldots\ll s_n$, we also get
\begin{equation*}
s_i\in K_{\gamma(i)}(\sigma_{i+1})\subseteq K^W_{\gamma(i)}(s_{i+1})\subseteq K^W_\gamma(s_{i+1}).
\end{equation*}
This entails $K^W_\gamma(s_i)\subseteq K^W_\gamma(s_{i+1})$ and then $s_0\in K^W_\gamma(s_n)$. As we have $h(s_0)<h(s_n)$ and hence $s_0\neq s_n$, we even get $s_0\in K_\gamma(\sigma_n)$, which amounts to $s_0\ll s_n$. 
\end{proof}

Let us agree to write $[N]^\omega$ for the set of infinite subsets of a given set~$N\subseteq\mathbb N$. To formulate the minimality principle of K\v{r}\'{i}\v{z}, we need the following terminology.

\begin{definition}
Consider a $\nu$-gap order $(X,\leq_X,q,\ll)$. A sequence $f:M\to X$ with $M\in[\mathbb N]^\omega$ is called bad if there are no $i,j\in M$ with $i<j$ and $f(i)\leq_X f(j)$. It is called regular if we have $q(f(i))\leq q(f(j))$ for all~$i<j$ in~$M$.
\end{definition}

The following coincides with \cite[Theorem~1.5]{kriz-conjecture}, except that our Definition~\ref{def:gap-order} is more restrictive than~\cite[Definition~1.4]{kriz-conjecture}. This means that the present statement is somewhat weaker than K\v{r}\'{i}\v{z}'s original result.

\begin{theorem}[K\v{r}\'{i}\v{z}'s minimality principle for~$\nu$]\label{thm:kriz-minimality}
Assume that $(X,\leq_X,q,\ll)$ is a $\nu$-gap order. Given any bad sequence $f:\mathbb N\to X$, one can find an $M\in[\mathbb N]^\omega$ and a regular bad sequence $g:M\to X$ such that
\begin{enumerate}[label=(\roman*)]
\item we have $g(i)\ll^*f(i)$ for all~$i\in M$,
\item there is no bad $h:N\to X$ with $N\in[M]^\omega$ and $h(i)\ll g(i)$ for all~$i\in N$.
\end{enumerate}
\end{theorem}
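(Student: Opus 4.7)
The plan is to adapt the Nash-Williams minimal bad sequence technique to the setting of a $\nu$-gap order, using the well-foundedness condition~(iv) on~$\ll$ for a stagewise $\ll$-minimization and using the $q$-labels to power a $\Pi^1_1$-recursion along~$\nu$. As a preliminary reduction, since $\langle q(f(n)):n\in\mathbb{N}\rangle$ is an $\omega$-sequence in the well-order $\nu$, a standard extraction yields an infinite $M_0\subseteq\mathbb{N}$ on which $q\circ f$ is non-decreasing; the restriction $f|_{M_0}$ is still bad, so I may assume $f$ itself is regular. Now $f$ trivially satisfies~(i) with $g=f$ by reflexivity of $\ll^*$, and the real task is to refine $f$ so as to secure~(ii) as well.

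The heart of the construction is a stagewise greedy selection. Call a finite tuple $(m_0<\cdots<m_{k-1};\,g_0,\ldots,g_{k-1})$ a \emph{live configuration} if $g_0,\ldots,g_{k-1}$ is a regular bad prefix with $g_j\ll^* f(m_j)$ that admits at least one extension to an infinite regular bad sequence $\hat g:M\to X$ satisfying $\hat g(i)\ll^* f(i)$ throughout. The empty tuple is live by the assumption on $f$. I would extend the current live configuration stage by stage, at each step picking the least new index $m_k$ admitting a live extension and then picking $g_k$ to be $\ll$-minimal among all elements for which $(\ldots,m_k;\ldots,g_k)$ is again live. Condition~(iv) guarantees that such a minimum exists. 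The resulting $g:M\to X$, with $M=\{m_k:k\in\mathbb{N}\}$, is then regular and bad with~(i) immediate from the construction. Property~(ii) follows by contradiction: a purported bad $h:N\to X$ with $h(i)\ll g(i)$ on some $N\in[M]^\omega$ would, upon thinning to a regular subsequence, produce at some stage a live extension whose $g_k$-entry is strictly $\ll$-below the one chosen, contradicting $\ll$-minimality. Axioms~(i)--(iii) of Definition~\ref{def:gap-order} are used here to ensure that the spliced sequence remains regular and bad; in particular, axiom~(iii) allows $\ll$-steps to compose along monotone $q$-values, and axiom~(ii) transports $\leq_X$-relations through $\ll$-steps.

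The principal obstacle is to formalize the stagewise selection in the intended base theory: the predicate ``this configuration is live'' is $\Sigma^1_1$, which is not decidable in $\mathsf{RCA}_0$ alone. This is where $\Pi^1_1$-recursion along~$\nu$ enters. Stratifying liveness by the minimum $q$-value to which the extending sequence is eventually committed, one defines, by $\Pi^1_1$-recursion along~$\nu$, a hierarchy $\langle E_\alpha:\alpha<\nu\rangle$ such that $E_\alpha$ collects the configurations admitting a regular bad extension whose labels lie eventually above~$\alpha$. With this hierarchy in hand the stagewise choice reduces to an arithmetical selection, and $g$ is definable by an arithmetic recursion. This stratification is precisely the refinement of K\v{r}\'{i}\v{z}'s argument that replaces the apparent use of impredicative comprehension (as flagged in the introduction) by $\Pi^1_1$-recursion along~$\nu$.
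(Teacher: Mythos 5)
There is a genuine gap at the heart of your argument, namely in the step where you derive property~(ii) from the coordinatewise $\ll$-minimality of~$g$. Given a bad $h:N\to X$ with $h(i)\ll g(i)$, your intended contradiction splices a prefix of~$g$ with (a thinning of)~$h$ and claims this is again a \emph{live} extension. But your notion of liveness requires the extension to be \emph{regular}, and the splice need not be: the relation $h(i)\ll g(i)$ imposes no lower bound whatsoever on $q(h(i))$ (in the tree example, an $\ll$-step can land on a subtree with a much smaller root label), so one cannot arrange $q(g(m_j))\leq q(h(i))$ for the prefix indices $m_j$, no matter how $h$ is thinned. Hence the splice is not a witness of liveness and the $\ll$-minimal choice is not contradicted. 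If you instead drop regularity from the definition of liveness, the problem reappears elsewhere: the badness of the splice needs condition~(ii) of Definition~\ref{def:gap-order}, which requires the label comparison $q(g(m_j))\leq q(g(n))$, i.e.\ regularity of the constructed~$g$ itself -- and a coordinatewise $\ll$-minimization gives no control over the labels of~$g$. This is not a technicality: it is exactly the obstruction that makes a one-shot Nash--Williams argument fail for the gap condition and forces the proof in the paper (following K\v{r}\'{i}\v{z}) to \emph{iterate} the minimal-bad-sequence construction transfinitely. There, one builds sequences $g_\alpha$ for $\alpha\leq\nu+2$, minimizing at stage $\beta+1$ with respect to the mixed measure $p_\beta(t)=\omega\cdot q_\beta(t)+r(t)$ (label plus rank, the rank coming from conditions~(iv) and~(v) of Definition~\ref{def:gap-order}), and the key ``preparatory claim'' shows that any bad $h$ with $h(i)\ll g_\beta(i)$ must satisfy $q(h(k))>q(g_\gamma(k))$ for all earlier stages~$\gamma$; since labels are bounded by~$\nu$, the construction must break down by stage~$\nu+2$, which is the contradiction with the assumed failure of~(ii).

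Relatedly, your use of $\Pi^1_1$-recursion does not match what is actually needed. Stratifying the $\Sigma^1_1$ predicate ``this configuration is live'' by an eventual lower bound on labels is not a recursion at all (the proposed $E_\alpha$ do not depend on earlier $E_\beta$), and even granted such a hierarchy you would only have repaired the definability of your selection, not the mathematical failure of the splice argument above. In the paper, $\Pi^1_1$-recursion along~$\nu$ enters differently: it drives the length-$(\nu+2)$ iteration of minimizations, where each stage's minimal sequence is obtained as an explicitly ($\beta$-minimal) branch of a perfect tree and is arithmetical in the $\Pi^1_1$-definable complement $\overline{T}(\mathcal S_\beta)$ of the tree of bad sequences below the previous stage (Theorem~\ref{thm:Pi11-rec-to-Kriz}); the bound $\nu+2$ on the number of stages -- secured precisely by the measure $p_\beta$ and the careful choice of the sets $N_\beta$ using the new condition~(v) -- is what makes recursion along~$\nu$ suffice. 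Your proposal contains neither the label-growth mechanism nor the bounded-stage iteration, so it does not prove the theorem as it stands.
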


Below, we show that K\v{r}\'{i}\v{z}'s minimality principle can be proved by $\Pi^1_1$-recursion. Together with the following result, this entails that (ii) implies~(i) in Theorem~\ref{thm:main}. We note that the following extends \cite[Theorem~1.7]{kriz-conjecture} from trees to general recursive data types. A related generalization is given in~\cite[Section~3]{kriz-conjecture}.

\begin{theorem}\label{thm:Kriz-to-KF}
The uniform Kruskal-Friedman theorem for~$\nu$ follows from K\v{r}\'{i}\v{z}'s minimality principle for~$\nu$, for any ordinal~$\nu$ and over~$\rca_0$.
\end{theorem}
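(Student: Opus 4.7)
I will derive the uniform Kruskal-Friedman theorem by contradiction. Assume that the initial $\nu$-Kruskal fixed point $(X,\kappa)=(\mathcal TW[\nu],\kappa_W^\nu)$ of a normal $\wpo$-dilator $W$ admits a bad sequence $f\colon\mathbb N\to X$. The plan is to feed $f$ into K\v{r}\'{\i}\v{z}'s minimality principle and then execute a Nash-Williams-style argument in which the $\wpo$-dilator hypothesis on $W$ closes the loop.

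First I would invoke Lemma~\ref{lem:fp-gap-order} to view $(X,\leq_X)$ as a $\nu$-gap order with the data from Definition~\ref{def:fp-gap-order}, and then apply Theorem~\ref{thm:kriz-minimality} to $f$ to obtain an infinite $M\in[\mathbb N]^\omega$ together with a regular bad sequence $g\colon M\to X$ enjoying the minimality property~(ii). Writing $g(i)=\kappa(\alpha_i,\sigma_i)$, regularity yields $\alpha_i\leq\alpha_j$ whenever $i<j$ in $M$; this will eventually let me avoid any appeal to a ``product of $\wpo$'s is a $\wpo$'' principle that might escape $\rca_0$.

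The heart of the proof is the claim that the suborder $A:=\bigcup_{i\in M}\supp_X(\sigma_i)$ of $X$ is itself a wpo. Suppose for contradiction that $h'\colon\mathbb N\to A$ is bad. Since each $\supp_X(\sigma_i)$ is finite, the range of $h'$ must meet infinitely many distinct supports, so after thinning I may assume $h'(k)\in\supp_X(\sigma_{i_k})$ with $i_k$ strictly increasing in $M$. The key bookkeeping observation, immediate from Definition~\ref{def:fp-gap-order}, is that any $x\in\supp_X(\sigma_i)$ satisfies $x\in K_\gamma^W(x)\subseteq K_\gamma(\sigma_i)$ for $\gamma=\min\{q(x),\alpha_i\}\leq q(x)$, so $x\ll g(i)$. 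Setting $N:=\{i_k\}\in[M]^\omega$ and $h(i_k):=h'(k)$ therefore yields a bad $h\colon N\to X$ with $h(i)\ll g(i)$ for every $i\in N$, contradicting the minimality property~(ii).

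To close the argument, I would use the support condition from Definition~\ref{def:po-dilator}: since $\supp_X(\sigma_i)\subseteq A$, each $\sigma_i$ lies in the range of $W(\iota)$ for the inclusion $\iota\colon A\hookrightarrow X$, and I write $\sigma_i=W(\iota)(\sigma_i')$ with $\sigma_i'\in W(A)$. Because $W$ is a $\wpo$-dilator and $A$ is a wpo, $W(A)$ is a wpo, so there exist $i<j$ in $M$ with $\sigma_i'\leq_{W(A)}\sigma_j'$; applying the embedding $W(\iota)$ gives $\sigma_i\leq_{W(X)}\sigma_j$, and together with $\alpha_i\leq\alpha_j$ from regularity the equivalence in Definition~\ref{def:nu-FP} yields $g(i)=\kappa(\alpha_i,\sigma_i)\leq_X\kappa(\alpha_j,\sigma_j)=g(j)$, contradicting badness of $g$. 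The main obstacle will be the wpo claim for $A$: one must bridge from a hypothetical bad sequence $h'$ in $A$ (which a priori has no relation to $g$) back to a witness against minimality, and the reindexing above together with the inclusion $\supp_X(\sigma_i)\subseteq\{x:x\ll g(i)\}$ is precisely the device that makes this work.
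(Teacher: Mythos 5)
Your proposal is correct and follows essentially the same route as the paper's proof: convert the initial $\nu$-Kruskal fixed point into a $\nu$-gap order via Lemma~\ref{lem:fp-gap-order}, apply Theorem~\ref{thm:kriz-minimality} to get a regular minimal bad sequence $g$, show that the union of supports is a well partial order by turning any bad sequence in it into a witness $h$ with $h(i)\ll g(i)$ against minimality, and then use the support condition, the $\wpo$-dilator property and the equivalence in Definition~\ref{def:nu-FP} to contradict badness of~$g$. The extra details you supply (injectivity of a bad sequence to justify the thinning, and the choice $\gamma=\min\{q(x),\alpha_i\}$ giving $x\ll g(i)$) match what the paper leaves implicit.
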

\begin{proof}
Given a normal $\wpo$-dilator~$W$, we consider its initial $\nu$-Kruskal fixed point $(X,\kappa)$ and the $\nu$-gap order~$(X,\leq_X,q,\ll)$ that arises from Lemma~\ref{lem:fp-gap-order}. To establish the uniform Kruskal-Friedman theorem, we must show that~$X$ is a well partial order. Aiming at a contradiction, we assume that there is a bad sequence~$f:\mathbb N\to X$. By K\v{r}\'{i}\v{z}'s minimality principle, we get a regular bad $g:M\to X$ as in Theorem~\ref{thm:kriz-minimality}. Let us write $g(i)=\kappa(q(g(i)),\sigma_i)$ in order to define
\begin{equation*}
Y:=\textstyle\bigcup_{i\in M}\supp_X(\sigma_i)\subseteq X.
\end{equation*}
We show that~$Y$ is a well partial order. If not, we find a bad sequence $h:N\to Y$ with $N\in[M]^\omega$ and $h(i)\in\supp_X(\sigma_i)$, as the supports are finite by Definition~\ref{def:po-dilator}. In view of Definition~\ref{def:fp-gap-order}, we put $\gamma:=\min\{q(g(i)),q(h(i))\}$ to get
\begin{equation*}
h(i)\in K^W_\gamma(h(i))\subseteq K_\gamma(\sigma_i)
\end{equation*}
and hence~$h(i)\ll g(i)$. We have reached a contradiction with~(ii) from Theorem~\ref{thm:kriz-minimality}. Let us now consider the inclusion~$\iota:Y\hookrightarrow X$. We may write $\sigma_i=W(\iota)(\tau_i)$ with~$\tau_i\in W(Y)$, due to the support condition from Definition~\ref{def:po-dilator}. Given that $Y$ is a well partial order, so is~$W(Y)$, since $W$ is a $\wpo$-dilator. We thus get $\tau_i\leq_{W(Y)}\tau_j$ for some $i<j$ in~$M$. Since $W$ preserves embeddings (see again Definition~\ref{def:po-dilator}), we can conclude~$\sigma_i\leq_{W(X)}\sigma_j$. The latter entails $g(i)\leq_X g(j)$ by the equivalence from Definition~\ref{def:nu-FP}, as $g$ is regular. This is in contradiction with $g$ being bad.
\end{proof}

We now present K\v{r}\'{i}\v{z}'s proof~\cite{kriz-conjecture} of his minimality principle with some small but crucial modifications (which will be pointed out). Due to these modifications, the principle of $\Pi^1_1$-recursion suffices to carry out the proof, as we shall argue below.

\begin{proof}[Proof of Theorem~\ref{thm:kriz-minimality}]
Let us begin with some preparations. For each ordinal~$\beta$, we truncate the given $q:X\to\nu$ into a function
\begin{equation*}
q_\beta:X\to\nu\quad\text{with}\quad q_\beta(t):=\begin{cases}
0 & \text{if }q(t)<\beta,\\
q(t) & \text{otherwise}.
\end{cases}
\end{equation*}
Conditions~(iv) and~(v) of Definition~\ref{def:gap-order} yield a rank function $r:X\to\omega$ such that $s\ll t$ entails $r(s)<r(t)$. While condition~(v) will be crucial later, it is not essential at this point, as one could also work with infinite ordinal ranks. We now consider
\begin{equation*}
p_\beta:X\to\omega\cdot\nu\quad\text{with}\quad p_\beta(t):=\omega\cdot q_\beta(t)+r(t).
\end{equation*}
At stage~$\beta+1$ of the recursive construction below, we minimize with respect to~$p_\beta$ where K\v{r}\'{i}\v{z} uses~$q$. We will see that this forces our construction to close after~$\nu+1$ stages, while K\v{r}\'{i}\v{z} gives no information on the closure ordinal, except that it must be countable. We now make precise what we mean by minimizing. For $k\in\mathbb N$ and $g:M\to X$ with $M\subseteq\mathbb N$, we put~$[k]:=\{0,\ldots,k-1\}$ and define $g[k]:M\cap[k]\to X$ as the restriction of~$g$. Given $g_i:M_i\to X$ with $M_i\in[\mathbb N]^\omega$ for $i\in\{0,1\}$, we write $g_0\ang_\beta g_1$ if we have $M_0\subseteq M_1$ and there is a $k\in M_1$ with $g_0[k]=g_1[k]$ and
\begin{equation*}
p_\beta\big(g_0(\min\{k'\in M_0\,|\,k'\geq k\})\big)<p_\beta\big(g_1(k)\big).
\end{equation*}
There is a close connection with the relation~$\ang$ from~\cite[Definition~2.1]{kriz-conjecture} and the minimal bad sequences of Nash-Williams~\cite{nash-williams63}. For the purpose of this proof, we declare that a tree is a non-empty set~$T$ of functions $\sigma:a\to X$ with finite domain~$a\subseteq\mathbb N$, such that $\sigma\in T$ entails $\sigma[k]\in T$. By a branch of~$T$ we mean an infinite function~$g:M\to X$ with $g[k]\in T$ for all~$k\in\mathbb N$. As usual, we write $[T]$ for the set of branches of~$T$ (even though this overloads the notation~$[\cdot]$). Given a non-empty set~$\mathcal S$ of infinite functions~$g:M\to X$, we define a tree $T(\mathcal S)$ by stipulating
\begin{equation*}
\sigma\in T(\mathcal S)\quad:\Leftrightarrow\quad \sigma=g[k]\text{ for some }g\in\mathcal S\text{ and }k\in\mathbb N.
\end{equation*}
We shall use the following variant of~\cite[Lemma~2.2]{kriz-conjecture}, which is standard but will also be verified in the proof of Theorem~\ref{thm:Pi11-rec-to-Kriz} below:
\begin{equation}\label{eq:Lem-2-2}\tag{$\dag$}
\parbox{.9\textwidth}{Consider a non-empty set $\mathcal S$ of infinite sequences~$g:M\to X$ with $M\subseteq\mathbb N$. Assume that we have $[T(\mathcal S)]\subseteq\mathcal S$. For any ordinal~$\beta$, we then have an~$h\in\mathcal S$ such that $g\ang_\beta h$ fails for all~$g\in\mathcal S$.}
\end{equation}
Intuitively speaking, the assumption $[T(\mathcal S)]\subseteq\mathcal S$ says that $\mathcal S$ is a property of infinite sequences that can be refuted on finite initial segments.

Aiming at a contradiction, we now assume that Theorem~\ref{thm:kriz-minimality} fails for a given bad~$f:\mathbb N\to X$. The sequence~$f$ itself witnesses that there is a bad $g_0:M_0\to X$ such that $g_0(i)\ll^*f(i)$ holds for all~$i\in M_0\in[\mathbb N]^\omega$. By~(\ref{eq:Lem-2-2}) we may assume that~$g_0$ is minimal, in the sense that $g\ang_0 g_0$ fails for any bad $g:M\to X$ with $g(i)\ll^*f(i)$ for~$i\in M\in[\mathbb N]^\omega$. Let us observe that this minimal~$g_0$ is regular: If not, then we have $q(g_0(k'))<q(g_0(k))$ for some $k<k'$ in~$M_0$. Now let
\begin{equation*}
g:(M_0\cap[k])\cup(M_0\backslash[k'])\to X
\end{equation*}
be the restriction of~$g_0$ to the indicated domain. Given that $q_0$ coincides with $q$, we get $p_0(g(k'))<p_0(g_0(k))$ and hence $g\ang_0 g_0$, which contradicts the minimality~of~$g_0$. Starting with $k_0:=\min(M_0)$ and $g_0:M_0\to X$ as specified, we shall use recursion on the ordinal~$\alpha$ to construct functions $g_\alpha:M_\alpha\to X$ and elements $k_\alpha\in M_\alpha$ that validate the following properties:
\begin{enumerate}[label=(\roman*.$\alpha$),itemsep=.5ex,topsep=.5ex]
\item we have $M_\alpha\in[\mathbb N]^\omega$, and $g_\alpha:M_\alpha\to X$ is regular and bad,
\item for any $\beta<\alpha$ and all $i\in M_\alpha$, we have $i\in M_\beta$ and $q(g_\alpha(i))\geq q(g_\beta(i))$, as~well as $g_\alpha(i)\ll g_\beta(i)$ or $g_\alpha(i)=g_\beta(i)$,
\item if $\alpha$ is a limit, any $i\in M_\alpha=\bigcap_{\beta<\alpha}M_\beta$ admits a $\beta<\alpha$ with $g_\alpha(i)=g_\beta(i)$,
\item if $\alpha=\beta+1$, then we have $g_\alpha(i)=g_\beta(i)$ for all $i\in M_\alpha\cap[k_\alpha]=M_\beta\cap[k_\alpha]$, as well as $g_\alpha(i)\ll g_\beta(i)$ for all $i\in M_\alpha\backslash[k_\alpha]$,
\item if $\alpha=\beta+1$ and $g'_\alpha:M'_\alpha\to X$ is the restriction of~$g_\alpha$ to $M'_\alpha:=M_\alpha\backslash[k_\alpha]$, then $g\ang_\beta g'_\alpha$ fails for any bad sequence $g:M\to X$ with $q(g(i))\geq q(g_\beta(i))$ and $g(i)\ll g_\beta(i)$ for all $i\in M\in[M'_\alpha]^\omega$.
\end{enumerate}
We note that $k_\alpha$ is only relevant when~$\alpha$ is a successor. In this case, (v.$\alpha$) asserts that $g_\alpha'$ is minimally bad with the properties in (ii.$\alpha$) and (iv.$\alpha$). All conditions are valid (and mostly void) when~$\alpha=0$. For the recursion step towards~$\alpha>0$, assume that $g_\gamma:M_\gamma\to X$ and $k_\gamma\in M_\gamma$ are defined and that (i.$\gamma$)~to~(v.$\gamma$) hold for $\gamma<\alpha$.

Let us first consider the case where~$\alpha$ is a limit. The crucial task is to show that the set $M_\alpha:=\bigcap_{\beta<\alpha}M_\beta$ is infinite. As in the proof of K\v{r}\'{i}\v{z}, we map $\beta<\alpha$ to
\begin{equation*}
k(\beta):=\min\{k_{\gamma+1}\,|\,\beta<\gamma<\alpha\}.
\end{equation*}
Note that $\alpha\ni\beta\mapsto k(\beta)\in\mathbb N$ is non-decreasing. Let us show that it is unbounded: If~not, pick a $\beta_0<\alpha$ such that $k:=k(\beta_0)=k(\beta)$ holds for $\beta_0\leq\beta<\alpha$. We then find a sequence $\beta_0<\gamma(0)<\gamma(1)<\ldots<\alpha$ with $k_{\gamma(i)+1}=k$ for all~$i\in\mathbb N$. Due to conditions~(iv) and~(ii) for $\gamma(i+1)+1$, we obtain
\begin{equation*}
g_{\gamma(i+1)+1}(k)\ll g_{\gamma(i+1)}(k)\ll^* g_{\gamma(i)+1}(k),
\end{equation*}
which contradicts the well foundedness of $\ll$, i.\,e., property~(iv) from Definition~\ref{def:gap-order}. For any $\beta<\alpha$, we now show $k(\beta)\in M_\gamma$ by induction on~$\gamma<\alpha$. When~$\gamma\leq\beta+1$, it suffices to observe that there is a $\delta>\beta$ with $k(\beta)=k_{\delta+1}\in M_{\delta+1}\subseteq M_\gamma$, where the inclusion relies on (ii.$\delta+1$). The induction step towards a limit~$\gamma$ is immediate by condition~(iii.$\gamma$). Finally, for $\gamma=\delta+1>\beta+1$ we have $k(\beta)\leq k_\gamma$ by definition. If this is an equality, the claim is immediate. Otherwise, we get
\begin{equation*}
k(\beta)\in M_\delta\cap[k_\gamma]=M_\gamma\cap[k_\gamma]
\end{equation*}
by the induction hypothesis and (iv.$\gamma$). We can now conclude that
\begin{equation*}
M_\alpha=\textstyle\bigcap_{\gamma<\alpha}M_\gamma\supseteq\{k(\beta)\,|\,\beta<\alpha\}
\end{equation*}
is infinite, as desired. By condition~(ii) and the well foundedness of~$\ll$, each~$i\in M_\alpha$ admits a $\beta(i)<\alpha$ such that $g_\beta(i)=g_{\beta(i)}(i)$ holds for $\beta(i)\leq\beta<\alpha$. To complete the limit case of the recursion step, we define $g_\alpha:M_\alpha\to X$ by $g_\alpha(i):=g_{\beta(i)}(i)$. It is straightforward to check that conditions (i.$\alpha$) to (v.$\alpha$) are satisfied.

Let us continue with the recursion step towards a successor ordinal~$\alpha=\beta+1$. As preparation, we consider an arbitrary bad $h:M\to X$ such that $h(i)\ll g_\beta(i)$ holds for all~$i\in M\in[M_\beta]^\omega$. By induction over~$\gamma\leq\beta$, we show that we have
\begin{equation*}
q(h(k))>q(g_\gamma(k))\quad\text{for all}\quad k\in M.
\end{equation*}
The induction step towards a limit~$\gamma$ is immediate by condition~(iii.$\gamma$). To establish the claim for~$\gamma=0$ and arbitrary~$k\in M$, we consider the sequence
\begin{equation*}
g:(M_0\cap[k])\cup(M\backslash[k])\to X\quad\text{with}\quad g(i):=\begin{cases}
g_0(i) & \text{if } i\in M_0\cap[k],\\
h(i) & \text{if }i\in M\backslash[k].
\end{cases}
\end{equation*}
For $j\in M\backslash[k]$ we can invoke condition (ii.$\beta$) to get $g_\beta(j)=g_0(j)$ or $g_\beta(j)\ll g_0(j)$ and $q(g_\beta(j))\geq q(g_0(j))$. In either case we obtain $h(j)\ll g_0(j)$, due to property~(iii) from Definition~\ref{def:gap-order}. It follows that $g$ is bad: if we had $g(i)\leq_X g(j)$ for $i\in M_0\cap[k]$ and $j\in M\backslash[k]$, we could conclude $g_0(i)\leq_X g_0(j)$ by (ii) of Definition~\ref{def:gap-order}, as $g_0$ is regular. Now $g\ang_0 g_0$ must fail by the choice of~$g_0$. We thus get
\begin{equation*}
\omega\cdot q_0(h(k))+r(h(k))=p_0(g(k))\geq p_0(g_0(k))=\omega\cdot q_0(g_0(k))+r(g_0(k)).
\end{equation*}
Since $h(k)\ll g_0(k)$ entails $r(h(k))<r(g_0(k))$, this yields $q_0(h(k))>q_0(g_0(k))$ and hence $q(h(k))>q(g_0(k))$. In the induction step towards a successor~$\gamma=\delta+1$, we first assume that we have $k\in M\cap[k_\gamma]$. By condition~(iv.$\gamma$) we get $g_\gamma(k)=g_\delta(k)$, so that it suffices to invoke the induction hypothesis. Now assume $k\in M\backslash[k_\gamma]$ and define $g'_\gamma:M'_\gamma\to X$ as in condition~(v.$\gamma$). We observe $M\backslash[k]\subseteq M'_\gamma$ and consider
\begin{equation*}
g':(M'_\gamma\cap[k])\cup(M\backslash[k])\to X\quad\text{with}\quad g'(i):=\begin{cases}
g'_\gamma(i) & \text{if }i\in M'_\gamma\cap[k],\\
h(i) & \text{if } i\in M\backslash[k].
\end{cases}
\end{equation*}
As in the case of $\gamma=0$, one can show that $g'$ is bad. Using the induction hypothesis and conditions (ii.$\gamma$) and (iv.$\gamma$) with $\gamma=\delta+1$, one obtains $q(g'(i))\geq q(g_\delta(i))$ and $g'(i)\ll g_\delta(i)$ for any $i$ in the domain of~$g'$. Invoking the minimality condition~(v.$\gamma$), we can infer $p_\delta(h(k))\geq p_\delta(g_\gamma(k))$. As before, we combine this with $h(k)\ll g_\gamma(k)$ to get $q_\delta(h(k))>q_\delta(g_\gamma(k))$ and then $q(h(k))>q(g_\gamma(k))$, as needed.

After these preparations, we carry out the recursion step towards~$\alpha=\beta+1$. We shall first describe the construction of a sequence $g_\alpha:M_\alpha\to X$ relative to a given set~$N_\beta\in[M_\beta]^\omega$. Afterwards, we will show that conditions (i.$\alpha$) to (v.$\alpha$) hold for a suitable choice of~$N_\beta$, which depends on the form of~$\beta$. As the first step in the construction of $g_\alpha$, we declare that $g^\circ_\beta:N_\beta\to X$ is the restriction of~$g_\beta$. Condition~(i.$\beta$) ensures that $g^\circ_\beta$ is regular and bad. By (ii.$\beta$) and the choice of~$g_0$, we have $g^\circ_\beta(i)\ll^*f(i)$ for all~$i\in N_\beta$. Due to the assumption that Theorem~\ref{thm:kriz-minimality} fails for~$f$, condition~(ii) of the theorem must be false for $g=g^\circ_\beta$. Hence we get a bad sequence $h:N\to X$ with $h(i)\ll g_\beta(i)$ for all $i\in N\in[N_\beta]^\omega$. The preparatory claim above yields $q(h(i))\geq q(g_\beta(i))$. By the principle~(\ref{eq:Lem-2-2}), we may assume that~$h$ is minimal with the given properties, in the sense that (v.$\alpha$) holds with $h:N\to X$ at the place of~$g'_\alpha:M'_\alpha\to X$. For $k_\alpha:=\min(N)$ and $M_\alpha:=(M_\beta\cap[k_\alpha])\cup N$, we now consider the sequence
\begin{equation*}
g_\alpha:M_\alpha\to X\quad\text{with}\quad g_\alpha(i):=\begin{cases}
g_\beta(i) & \text{if }i\in M_\beta\cap[k_\alpha],\\
h(i) & \text{if }i\in N.
\end{cases}
\end{equation*}
This does indeed yield $M'_\alpha=M_\alpha\backslash[k_\alpha]=N$ and $g'_\alpha=h$. So (iv.$\alpha$) and~(v.$\alpha$) hold by construction, while (iii.$\alpha$) is void for a successor. Invoking~(iii) of Definition~\ref{def:gap-order}, one readily derives (ii.$\alpha$) from~(ii.$\beta$). If we had $g_\beta(i)\leq_X h(j)$ with $i\in M_\beta\cap[k_\alpha]$ and~$j\in N$, we would get $g_\beta(i)\leq_X g_\beta(j)$ by~(iii) of Definition~\ref{def:gap-order}, as $g_\beta$ is regular due to~(i.$\beta$). So the sequence~$g_\alpha$ is bad. This reduces~(i.$\alpha$) to the claim that~$h$ is regular. For indices $j<k$ in~$N$, the minimality condition~(v.$\alpha$) entails~$q_\beta(h(j))\leq q_\beta(h(k))$, by the argument that we have used to prove the regularity of~$g_0$. To conclude the recursion step, we now describe a choice of $N_\beta\in[M_\beta]^\omega$ that ensures
\begin{equation}\label{eq_q-beta-to-q}\tag{$\ddag$}
q_\beta(h(j))\leq q_\beta(h(k))\quad\Rightarrow\quad q(h(j))\leq q(h(k))\qquad\text{for }j<k\text{ in }N\in[N_\beta]^\omega.
\end{equation}
Let us point out that the previous arguments are essentially due to K\v{r}\'{i}\v{z}~\cite{kriz-conjecture}. The latter, how\-ever, had no need to show~(\ref{eq_q-beta-to-q}), as he worked with~$q$ at the place of~$p_\beta$. This means that the following is a new contribution of the present paper. To~satisfy~(\ref{eq_q-beta-to-q}) for~$\beta=0$, we can simply take $N_0:=M_0$, as $q_0$ and $q$ coincide. When $\beta=\gamma+1$ is a successor itself, we put~$N_\beta:=M'_\beta=M_\beta\backslash[k_\beta]$. First note that~(\ref{eq_q-beta-to-q}) is immediate when we have $q_\beta(h(j))>0$ and hence $q_\beta(h(j))=q(h(j))$. To cover the case where we have $q_\beta(h(j))=0$ and thus $q(h(j))<\beta=\gamma+1$, it suffices to establish~$q(h(k))\geq\gamma$. Aiming at a contradiction, we assume $q(h(k))<\gamma$, which entails $q_\gamma(h(k))=0$. From above we have $h(k)\ll g_\beta(k)$ and thus $r(h(k))<r(g_\beta(k))$, so that we get
\begin{equation*}
p_\gamma(h(k))=\omega\cdot q_\gamma(h(k))+r(h(k))<\omega\cdot q_\gamma(g_\beta(k))+r(g_\beta(k))=p_\gamma(g_\beta(k)).
\end{equation*}
One readily derives a contradiction with (v.$\beta$), by considering the sequence
\begin{equation*}
g:(M'_\beta\cap[k])\cup(N\backslash[k])\to X\quad\text{with}\quad g(i):=\begin{cases}
g_\beta(i) & \text{if }i\in M'_\beta\cap[k],\\
h(i) & \text{if }i\in N\backslash[k].
\end{cases}
\end{equation*}
For $\alpha=\beta+1=\gamma+2$ and $k\in N=M'_\alpha\backslash[k_\beta]$, we have seen $q(g_\alpha(k))=q(h(k))\geq\gamma$. We now consider $\alpha=\beta+1$ for a limit~$\beta$. For all $\gamma<\beta$, the given argument yields
\begin{equation*}
q(g_{\gamma+2}(k))\geq\gamma\quad\text{for any}\quad k\in M_{\gamma+2}\backslash[\max(k_{\gamma+1},k_{\gamma+2})].
\end{equation*}
With the aim of setting $N_\beta:=\{i(j)\,|\,j\in\mathbb N\}$, we define $i(0)<i(1)<\ldots\subseteq M_\beta$ as follows: Assuming that $i(j')$ is defined for~$j'<j$, put
\begin{equation*}
\gamma(j):=\sup\{q(t)\,|\, t\in X\text{ with }q(t)<\beta\text{ and }t\ll g_\beta(i(j'))\text{ for some }j'<j\}.
\end{equation*}
Crucially, the new condition~(v) in Definition~\ref{def:gap-order} ensures~$\gamma(j)<\beta$. We now pick a next index $i(j)\in M_\beta$ with $i(j)\geq\max(k_{\gamma(j)+1},k_{\gamma(j)+2})$. Let us note that this choice of~$N_\beta$ is loosely inspired by an argument of Gordeev~\cite{gordeev-gap}. Consider~(\ref{eq_q-beta-to-q}) with $i(j')$ and $i(j)$ at the place of $j$ and $k$, respectively. In the non-trivial case where we have $q_\beta(h(i(j')))=0$, we can invoke $h(i(j'))\ll g_\beta(i(j'))$ to get
\begin{equation*}
q(h(i(j')))\leq\gamma(j)\leq q(g_{\gamma(j)+2}(i(j)))\leq q(g_\beta(i(j)))\leq q(h(i(j))).
\end{equation*}
This completes the proof of~(\ref{eq_q-beta-to-q}) and hence the last case in the recursive construction of sequences $g_\alpha:M_\alpha\to X$ and elements $k_\alpha\in M_\alpha$.

Recall that we aim at a contradiction with the assumption that Theorem~\ref{thm:kriz-minimality} fails for the given~$f$. To conclude his proof, K\v{r}\'{i}\v{z}~\cite{kriz-conjecture} argues that the recursive construction cannot be successful up to an ordinal~$\alpha$ that has uncountable cofinality. Indeed, we have seen that the limit case gives rise to a map $\alpha\ni\beta\mapsto k(\beta)\in\mathbb N$ that is non-decreasing and unbounded. It follows that $\mathbb N\ni k\mapsto\min\{\beta<\alpha\,|\,k(\beta)\geq k\}$~is cofinal in~$\alpha$. Our modified construction cannot even be successful up to~$\alpha=\nu+2$. Otherwise, we would get $q(g_{\nu+2}(k))\geq\nu$ for sufficiently large~$k\in M_{\nu+2}$, as we have seen above. This would yield a value beyond the range of~$q:X\to\nu$.
\end{proof}

To conclude this section, we discuss the formalization of the previous proof. Let us recall that Definition~\ref{def:gap-order} is more restrictive than the notion of `quasiordering with gap-condition' in~\cite{kriz-conjecture}, as we have added the new condition~(v). Thus our version of `K\v{r}\'{i}\v{z}'s minimality principle' is somewhat weaker than the original result in~\cite{kriz-conjecture}. It is still strong enough for a large class of applications, as Theorem~\ref{thm:Kriz-to-KF} demonstrates. We do not know whether the following remains valid for K\v{r}\'{i}\v{z}'s original principle.

\begin{theorem}\label{thm:Pi11-rec-to-Kriz}
K\v{r}\'{i}\v{z}'s minimality principle for~$\nu$ (as formulated in Theorem~\ref{thm:kriz-minimality}) follows from $\Pi^1_1$-recursion along~$\nu$, where~$\nu$ can be any ordinal.
\end{theorem}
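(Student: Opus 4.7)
The plan is to formalize the transfinite construction of the sequences~$g_\alpha$ and the indices~$k_\alpha$ from the proof of Theorem~\ref{thm:kriz-minimality} as a $\Pi^1_1$-recursion along~$\nu+3$, which is available over~$\rca_0$ from $\Pi^1_1$-recursion along~$\nu$ by a routine reindexing (for infinite~$\nu$; finite~$\nu$ reduces to a bounded iteration of $\Pi^1_1$-comprehension and is handled separately). Arguing by contradiction, I would assume that a bad~$f:\mathbb N\to X$ witnesses the failure of Theorem~\ref{thm:kriz-minimality} and aim to build a hierarchy $\langle Y_\alpha\,|\,\alpha<\nu+3\rangle$ coding $(g_\alpha,M_\alpha,k_\alpha)$. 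Once~$Y_{\nu+2}$ is produced, the computation at the end of that proof yields some $k\in M_{\nu+2}$ with $q(g_{\nu+2}(k))\geq\nu$, contradicting $q:X\to\nu$.

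The first subsidiary task is to formalize the auxiliary principle~$(\dag)$ within $\Pi^1_1$-comprehension. Given a set~$\mathcal S$ with $[T(\mathcal S)]\subseteq\mathcal S$ and an ordinal~$\beta$, the property ``$\sigma\in T(\mathcal S)$ extends to an infinite element of~$\mathcal S$'' is $\Sigma^1_1$ in~$\mathcal S$, so $\Pi^1_1$-comprehension lets one form the subtree~$T^\star_\beta\subseteq T(\mathcal S)$ consisting of those extendible nodes that minimize~$p_\beta$ among their extendible siblings. A branch $h\in[T^\star_\beta]$ is then obtained by arithmetical recursion on~$\mathbb N$, lies in~$\mathcal S$ by the closure hypothesis, and by construction admits no $g\in\mathcal S$ with $g\ang_\beta h$.

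Next I would encode each stage of K\v{r}\'{i}\v{z}'s recursion uniformly. At a successor $\alpha=\beta+1$, the set $\mathcal S_\beta$ of bad $h:N\to X$ with $N\in[N_\beta]^\omega$ and $h(i)\ll g_\beta(i)$ is arithmetical in~$Y_\beta$, its closure under~$[T(\cdot)]$ is automatic, and the formalized~$(\dag)$ produces the minimal~$h$ that defines $g_\alpha$ and~$k_\alpha$. At a limit~$\alpha$, both $i\mapsto\beta(i)$ and the sequence~$g_\alpha$ are arithmetical in~$\langle Y_\gamma\,|\,\gamma<\alpha\rangle$, by condition~(ii.$\gamma$) and the well-foundedness of~$\ll$ from Definition~\ref{def:gap-order}. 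Consequently, the defining clause for~$Y_\alpha$ becomes a $\Pi^1_1$-formula in the prior hierarchy, as required for $\Pi^1_1$-recursion.

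The main obstacle I anticipate is to confirm that the preparatory claim $q(h(k))>q(g_\gamma(k))$, which in the proof of Theorem~\ref{thm:kriz-minimality} is verified after each successor stage by an induction on~$\gamma\leq\beta$, can be built into the $\Pi^1_1$-clause defining~$Y_\alpha$ rather than argued externally. The remedy is to store the relevant minimality witnesses and the restriction sets~$N_\beta$ (chosen at limit~$\beta$ via condition~(v) of Definition~\ref{def:gap-order}) explicitly inside~$Y_\beta$, so that the induction becomes an arithmetical recursion on already-coded data. Once this bookkeeping is in place, the hierarchy $\langle Y_\alpha\,|\,\alpha<\nu+3\rangle$ is produced by $\Pi^1_1$-recursion along~$\nu+3$ and the contradiction at~$\alpha=\nu+2$ closes the argument.
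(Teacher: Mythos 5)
Your overall plan coincides with the paper's: organize K\v{r}\'{i}\v{z}'s construction as a single $\Pi^1_1$-recursion along (roughly) $\nu+3$, derive the contradiction at stage $\nu+2$, and verify the conditions (i.$\alpha$)--(v.$\alpha$) by $\Pi^1_1$-transfinite induction. The genuine gap is in your formalization of~$(\dag)$, which is the heart of the matter. You form the pruned tree $T^\star_\beta$ by a comprehension applied to the $\Sigma^1_1$ extendibility condition and then take a branch arithmetically in $T^\star_\beta$. But in a $\Pi^1_1$-recursion each stage must literally be given by a $\Pi^1_1$ formula in the earlier stages; you cannot interleave a comprehension inside the step. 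Membership in $T^\star_\beta$ is ``extendible and $p_\beta$-minimal among extendible siblings'', which places the $\Sigma^1_1$ extendibility predicate in both positive and negative positions, so $T^\star_\beta$ is only a Boolean combination of $\Sigma^1_1$ sets over the prior stages, and $g_{\beta+1}$, being arithmetical in $T^\star_\beta$, need not be $\Pi^1_1$-definable from the prior hierarchy at all. Your assertion that ``the defining clause for $Y_\alpha$ becomes a $\Pi^1_1$-formula'' is therefore unjustified as stated. The paper's proof shows exactly how to avoid this: no extendibility pruning is needed, because $T(\mathcal S)$ is perfect (every node is an initial segment of a member of~$\mathcal S$, hence has a proper extension), so a greedy $\beta$-minimal branch $g_{T,\beta}$ exists and is \emph{arithmetically} definable from the unpruned tree $T$ via the explicit minimality conditions (minimize $p_\beta$, then the position, then the numerical code). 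The recursion then carries the auxiliary sets $\overline T(\mathcal S_\alpha)$, which are $\Pi^1_1$-definable from the previous stages since $h\in\mathcal S_\alpha$ is arithmetical in $g_\alpha$ and the $k_\beta$; at the next stage $g_{\alpha}$ and $k_\alpha$ are arithmetical in the stored data. Your proposal could be repaired along the same lines (store the $\Pi^1_1$-definable complement of the extendibility set as a component of each stage, so the next stage is arithmetical in it), but the cleaner route is to drop the extendibility check altogether.

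A secondary point: the obstacle you single out, the preparatory claim $q(h(k))>q(g_\gamma(k))$, is not where the difficulty lies and your proposed remedy does not quite work. That claim is a universally quantified ($\Pi^1_1$) statement about all bad sequences $h$, so storing ``minimality witnesses'' cannot turn its verification into an arithmetical recursion; it does not need to be part of the recursion clauses at all, and is handled, as in the paper, by observing that conditions (i.$\alpha$)--(v.$\alpha$) and the auxiliary claims are $\Pi^1_1$, so that the required transfinite induction is available from $\Pi^1_1$-recursion along $\nu$ (and hence along $\nu+3$).
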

\begin{proof}
We build on the notation and arguments from the previous proof. Our first aim is to show that the sequence~$h$ in~(\ref{eq:Lem-2-2}) is arithmetically definable from~$T(\mathcal S)$ and~$\beta$. In particular, we will confirm that~(\ref{eq:Lem-2-2}) is correct. Let us say that a tree~$T$ is perfect if we have $T\subseteq T([T])$ and hence $T=T([T])$ (in particular~$[T]\neq\emptyset$). Note that this is equivalent to the arithmetical condition that~$T$ has no leaves. Given any perfect tree~$T$, we can apply~(\ref{eq:Lem-2-2}) with $\mathcal S:=[T]$, which yields $T=T(\mathcal S)$. Conversely, $T(\mathcal S)$ is perfect for any set~$\mathcal S\neq\emptyset$ of infinite $g:M\to X$, as we have $\mathcal S\subseteq[T(\mathcal S)]$. Under the assumption of~(\ref{eq:Lem-2-2}), we can recover $\mathcal S=[T]$ from $T:=T(\mathcal S)$. So~(\ref{eq:Lem-2-2}) is equivalent to the statement that any perfect tree~$T$ has a branch $g_{T,\beta}\in[T]$ such that $g\ang_\beta g_{T,\beta}$ fails for all~$g\in[T]$, where $\beta$ can be any ordinal. To construct~$g_{T,\beta}$ explicitly, we consider $\sigma\in T$ and write it as $\sigma:\{a_i\,|\,i<k\}\to X$ with $a_0<\ldots<a_{k-1}$~in~$\mathbb N$. We say that $\sigma$ is $\beta$-minimal in~$T$ if the following holds for each index~$j<k$, every number $a'_j\geq\sup\{a_i+1\,|\,i<j\}$ and any sequence $\tau:\{a_i\,|\,i<j\}\cup\{a'_j\}\to X$ in~$T$ such that we have $\sigma(a_i)=\tau(a_i)$ for all $i<j$:
\begin{enumerate}[label=(\roman*)]
\item we have $p_\beta(\sigma(a_j))\leq p_\beta(\tau(a'_j))$,
\item if equality holds in~(i), then we have $a_j\leq a'_j$,
\item if equality holds in~(i) and~(ii), then the numerical code of $\sigma(a_j)$ is smaller than or equal to the one of~$\tau(a'_j)$.
\end{enumerate}
Note that these conditions are arithmetical. Given that~$T$ is perfect, a straightforward induction shows that it contains a unique $\beta$-minimal $\sigma_k:\{a_{k,i}\,|\,i<k\}\to X$ of each length~$k\in\mathbb N$, where we still assume $a_{k,0}<\ldots<a_{k,k-1}$. When we have~$j<k$, the restriction~$\sigma_k[a_{k,j}]$ is $\beta$-minimal and thus equal to~$\sigma_j$. So by uniqueness, the numbers $a_i:=a_{k,i}$ and the values $\sigma_k(a_i)$ do not depend on~$k$. We now consider
\begin{equation*}
g_{T,\beta}:\{a_i\,|\,i\in\mathbb N\}=:M_{T,\beta}\to X\quad\text{with}\quad g_{T,\beta}(a_i):=\sigma_{i+1}(a_i).
\end{equation*}
The construction ensures that we have $g_{T,\beta}\in[T]$ and that $g\ang_\beta g_{T,\beta}$ fails for any branch~$g\in[T]$. Due to the considerations above, we obtain~(\ref{eq:Lem-2-2}) with $h=g_{T(\mathcal S),\beta}$. Crucially, our presentation demonstrates that $g_{T,\beta}:M_{T,\beta}\to X$ is arithmetically definable from~$T$ and~$\beta$. For example, we have $a\in M_{T,\beta}$ precisely when~$a$ lies in the domain of some or every $\beta$-minimal sequence of suitable finite length.

In the notation from the previous proof, let $\mathcal S_\beta$ be the set of all bad $h:N\to X$ with $q(h(i))\geq q(g_\beta(i))$ and $h(i)\ll g_\beta(i)$ for all~$i\in N\in[N_\beta]^\omega$. By considering the recursive construction in this proof, we see that the sequence from condition~(v.$\alpha$) can be given as
\begin{equation*}
g'_{\beta+1}=g_{T,\beta}\quad\text{for}\quad T=T(\mathcal S_\beta).
\end{equation*}
Let us agree that $\overline T(\mathcal S)$ denotes the complement of $T(\mathcal S)$, say, within the set of all finite sequences~$\sigma:a\to X$ with~$a\subseteq\mathbb N$. We argue that $\Pi^1_1$-recursion on~$\alpha$ allows for a simultaneous construction of the sequences $g_\alpha:M_\alpha\to X$, the numbers~$k_\alpha\in M_\alpha$ and the auxiliary sets~$\overline T(\mathcal S_\alpha)$. Indeed, we have just seen that $g'_{\beta+1}$ is arithmetical in $T(\mathcal S_\beta)$ and hence in~$\overline T(\mathcal S_\beta)$. One can conclude that $g_\alpha$ and~$k_\alpha$ are arithmetically definable from the previous stages of the recursion. Also, the relation $h\in\mathcal S_\alpha$ is arith\-metical with parameters $g_\alpha$ and~$k_\beta$ for~$\beta\leq\alpha$ (which determine~$N_\alpha$). It follows that~$T(\mathcal S_\alpha)$ is $\Sigma^1_1$-definable from these parameters, due to the defining equivalence for~$T(\mathcal S)$ in the paragraph before statement~(\ref{eq:Lem-2-2}). Hence the complement~$\overline T(\mathcal S_\alpha)$ is \mbox{$\Pi^1_1$-}definable from the previous~stages. Given that $\Pi^1_1$-recursion is permitted along~$\nu$ and hence along~$\nu+3$ (in the non-trivial case of~$\nu>0$), we can thus carry out the recursive construction from the previous proof. Once this is achieved, it remains to accommodate the inductive verification of conditions~(i.$\alpha)$ to~(v.$\alpha$). Since the latter are $\Pi^1_1$, the induction principle is justified by $\Pi^1_1$-recursion (along~$1\leq\nu$).
\end{proof}

\section{From Kruskal-Friedman theorem to {$\Pi^1_1$}-recursion}

In this section, we prove the remaining implication of Theorem~\ref{thm:main}, which asserts that the uniform Kruskal-Friedman theorem entails $\Pi^1_1$-recursion. A sketch of the proof can be found in the introduction.

The following result allows us to work over a stronger base theory. It will be superseded by the aforementioned implication from Theorem~\ref{thm:main}.

\begin{lemma}\label{lem:KF-to-ACA}
If the uniform Kruskal-Friedman theorem holds for some~$\nu>0$, then we get arithmetical comprehension, over the base theory~$\rca_0+\mathsf{CAC}$.
\end{lemma}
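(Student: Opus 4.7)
The plan is to reduce the hypothesis to the uniform Kruskal theorem (the special case $\nu=1$) and then invoke the main result of~\cite{frw-kruskal}, which establishes that this $\nu=1$ case is equivalent to $\Pi^1_1$-comprehension over $\rca_0+\mathsf{CAC}$. Since $\Pi^1_1$-comprehension trivially entails arithmetical comprehension, this immediately yields the conclusion (and, incidentally, something much stronger than asserted -- which is why the lemma can later be superseded).

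The reduction itself is straightforward. If the given $\nu$ already equals $1$, there is nothing to do. If $\nu>1$, I would apply Corollary~\ref{cor:KF-monotone} with the ordinals $1<\nu$ to obtain, for each normal $\po$-dilator $W$, an order embedding $\mathcal TW[1]\hookrightarrow\mathcal TW[\nu]$. Specializing $W$ to be a normal $\wpo$-dilator, the assumed uniform Kruskal-Friedman theorem for $\nu$ says that $\mathcal TW[\nu]$ is a well partial order. Any order embedding allows one to pull back infinite bad sequences: a hypothetical bad sequence $(s_n)\subseteq\mathcal TW[1]$ would map to a bad sequence in $\mathcal TW[\nu]$ (since the embedding is order reflecting), contradicting the well-partial-orderedness of the codomain. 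Hence $\mathcal TW[1]$ is a well partial order for every normal $\wpo$-dilator~$W$, which is exactly the statement of the uniform Kruskal theorem as treated in~\cite{frw-kruskal}.

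The main -- and essentially only -- obstacle is to check that the whole argument lives inside the stipulated base theory. Corollary~\ref{cor:KF-monotone} is proved in $\rca_0$ alone, and the pullback of bad sequences along a computable order embedding is a trivial $\rca_0$-construction that does not require $\mathsf{CAC}$. The cited equivalence from~\cite{frw-kruskal} is already stated over $\rca_0+\mathsf{CAC}$, so nothing beyond that base theory is needed and the chain of implications closes cleanly.
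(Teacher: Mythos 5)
Your argument is correct, and its first half is exactly the paper's: the paper also reduces to $\nu=1$ via Corollary~\ref{cor:KF-monotone} (handling the embedding/bad-sequence pullback just as you do, all within $\rca_0$). The difference lies in the final step. The paper cites the bootstrapping result \cite[Lemma~4.6]{frw-kruskal}, which derives arithmetical comprehension directly from the $\nu=1$ case via Higman's lemma: one builds the normal $\wpo$-dilator $W[Z](X)=1+Z\times X$, checks that $\seq(Z)$ with Higman's order is its initial $1$-Kruskal fixed point, and invokes Simpson's equivalence of Higman's lemma with $\mathsf{ACA}_0$ (this is where $\mathsf{CAC}$ enters, to see that $W[Z]$ preserves wpo's). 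You instead invoke the full main theorem of \cite{frw-kruskal}, that the uniform Kruskal theorem is equivalent to $\Pi^1_1$-comprehension over $\rca_0+\mathsf{CAC}$, and then weaken $\Pi^1_1$-$\mathsf{CA}$ to arithmetical comprehension. This is logically sound and not circular for the present paper (the cited equivalence is external), but it is a much heavier hammer: the forward direction of that equivalence itself rests on a bootstrapping lemma of the kind you are bypassing, so in substance you are citing the same mathematics one level up. What the paper's more elementary route additionally buys is expository: the explicit construction with $W[Z]$ is recalled precisely so that it can be adapted in Lemma~\ref{lem:bootstrap-KF-univ}, where $\mathsf{CAC}$ is unavailable and the ordinal $\nu$ replaces the order $Z$; your version loses that parallel but proves the stated lemma all the same.
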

\begin{proof}
Using Corollary~\ref{cor:KF-monotone}, we can reduce the claim to the case of~$\nu=1$, which is covered by~\cite[Lemma~4.6]{frw-kruskal}. We recall the proof of the cited lemma, so that the reader can compare it with the next proof below. Let us write $\seq(Z)$ for the set of finite sequences~$\sigma=\langle z_1,\ldots,z_n\rangle$ with entries~$z_i\in Z$. For another element~$z_0\in Z$ and $\sigma\in\seq(Z)$ as given, we put ${z_0}^\frown\sigma:=\langle z_0,\ldots,z_n\rangle$. If~$Z$ is a partial order, we declare that $\langle\rangle\leq_{\seq(Z)}\sigma$ holds for all~$\sigma\in\seq(Z)$ and that we have
\begin{equation*}
{z_0}^\frown\sigma_0\leq_{\seq(Z)}{z_1}^\frown\sigma_1\quad\Leftrightarrow\quad(z_0\leq_Z z_1\text{ and }\sigma_0\leq_{\seq(Z)}\sigma_1)\text{ or }{z_0}^\frown\sigma_0\leq_{\seq(Z)}\sigma_1.
\end{equation*}
This is a recursive characterization of the order from Higman's lemma, which asserts that $\seq(Z)$ is a well partial order whenever the same holds for~$Z$. The latter is equivalent to arithmetical comprehension, as shown by Simpson~\cite{simpson-higman}. To complete the proof, we show that Higman's lemma follows from the uniform Kruskal-Friedman theorem for~$\nu=1$. Assume~$Z$ is a well partial order. For each partial order~$X$, put
\begin{equation*}
W[Z](X):=1+Z\times X=\{0\}\cup\{(z,x)\,|\,z\in Z\text{ and }x\in X\}.
\end{equation*}
Let us agree that the only inequalities in~$W[Z](X)$ are $0\leq 0$ and $(z,x)\leq(z',x')$ for $z\leq_Z z'$ and $x\leq_X x'$. For a quasi embedding $f:X\to Y$, we define functions
\begin{equation*}
W[Z](f):W[Z](X)\to W[Z](Y)\quad\text{and}\quad\supp_X:W[Z](X)\to[X]^{<\omega}
\end{equation*}
by setting $W[Z](f)(0):=0$ and $W[Z](f)(z,x):=(z,f(x))$ as well as $\supp_X(0):=\emptyset$ and $\supp_X(z,x):=\{x\}$. It is straightforward to check that this turns $W[Z]$ into a normal $\po$-dilator. To learn that we have a $\wpo$-dilator, we invoke the chain antichain principle, which ensures that the well partial orders are closed under taking products (see~\cite{cholak-RM-wpo}). Now consider the function
\begin{equation*}
1\times W[Z](\seq(Z))\cong W[Z](\seq(Z))\xrightarrow{\quad\kappa\quad}\seq(Z)
\end{equation*}
with $\kappa(0):=\langle\rangle$ and $\kappa(z,\sigma):=z^\frown\sigma$. Our recursive characterization of Higman's order coincides with the equivalence from Definition~\ref{def:nu-FP}. Hence $\seq(Z)$ and~$\kappa$ form a $1$-Kruskal fixed point of~$W[Z]$. This fixed point is initial, since the usual length function $h:\seq(Z)\to\mathbb N$ validates condition~(ii) of Theorem~\ref{thm:initial-criterion}. Now the uniform Kruskal-Friedman theorem entails that $\seq(Z)$ is a well partial order.
\end{proof}

We do not know if $\mathsf{CAC}$ follows from the uniform Kruskal-Friedman theorem for fixed~$\nu$, even though the theorem has enormous strength when $\mathsf{CAC}$ is present. At the same time, the next result shows that $\mathsf{CAC}$ is not needed when we admit arbitrary~$\nu$, as the latter can then replace the order~$Z$ from the previous proof. This accounts for the weaker base theory in Corollary~\ref{cor:FP-TR}.

\begin{lemma}\label{lem:bootstrap-KF-univ}
If the uniform Kruskal-Friedman theorem holds for all~$\nu$, then we get arithmetical comprehension (and in particular~$\mathsf{CAC}$), over the base theory~$\rca_0$.
\end{lemma}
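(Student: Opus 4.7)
The plan is to derive the well-ordering principle \emph{WOP}$(\omega^\alpha)$ stating that $\omega^\alpha$ is a well-order for every well-order~$\alpha$; this principle is equivalent to arithmetical comprehension over~$\rca_0$ (a classical reverse-mathematical result; see~\cite{simpson09}), and $\mathsf{ACA}_0$ in turn implies~$\mathsf{CAC}$, so both conclusions of the lemma will follow.

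For this I reuse the normal $\po$-dilator $W(X):=1+X$ from the proof of Lemma~\ref{lem:KF-to-ACA}, in which $0$ is incomparable to the elements of~$X$; this is the $Z=1$ specialisation of the earlier $W[Z]$. One verifies in~$\rca_0$, and without invoking~$\mathsf{CAC}$, that $W$ is a normal $\wpo$-dilator: if $X$ is a WPO, then any infinite sequence in $1+X$ contains at most one copy of~$0$ (else the pair $0\leq 0$ is good), and after removing such an occurrence one obtains a bad sequence in~$X$, contradicting the hypothesis.

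Given a well-order~$\alpha$, the uniform Kruskal-Friedman theorem applied to~$W$ with $\nu=\alpha$ yields that the $\alpha$-Kruskal fixed point~$\mathcal{T}W[\alpha]$ is a WPO. Unfolding Definition~\ref{def:TW-nu}, its elements are the non-empty finite $\alpha$-labelled chains $(\gamma_0,\gamma_1,\ldots,\gamma_{n-1})$ under the gap-embedding order. I define $f:\omega^\alpha\to\mathcal{T}W[\alpha]$ by sending~$\beta$ with Cantor normal form $\omega^{\gamma_0}+\omega^{\gamma_1}+\cdots+\omega^{\gamma_{n-1}}$ (so $\gamma_0\geq\gamma_1\geq\cdots\geq\gamma_{n-1}$) to $f(\beta):=(\gamma_0,\gamma_1,\ldots,\gamma_{n-1})$, and verify by induction on total chain size that $f$ is order reflecting. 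The base case $n_1=n_2=1$ is immediate. In the inductive step, writing $f(\beta_i)=(\delta^i_0,\ldots,\delta^i_{n_i-1})$, the hypothesis $f(\beta_1)\leq_{\mathcal{T}W[\alpha]}f(\beta_2)$ forces $\delta^1_0\leq\delta^2_0$ and one of the two clauses of Definition~\ref{def:nu-FP}. Clause~(i) reduces to a subtree comparison $(\delta^1_1,\ldots)\leq(\delta^2_1,\ldots)$, to which the induction hypothesis applies, yielding $\omega^{\delta^1_1}+\cdots\leq\omega^{\delta^2_1}+\cdots$ and hence $\beta_1\leq\beta_2$. Clause~(ii) provides $f(\beta_1)\leq_{\mathcal{T}W[\alpha]}(\delta^2_1,\ldots,\delta^2_{n_2-1})$, where the right-hand chain is $f(\beta_2')$ for $\beta_2':=\omega^{\delta^2_1}+\cdots+\omega^{\delta^2_{n_2-1}}$; the induction hypothesis then gives $\beta_1\leq\beta_2'\leq\beta_2$.

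Once $f$ is order reflecting, an infinite strictly descending sequence in $\omega^\alpha$ would translate via~$f$ to a bad sequence in $\mathcal{T}W[\alpha]$, contradicting the Kruskal-Friedman theorem. Hence $\omega^\alpha$ is well-founded and, being linear, a well-order. Since~$\alpha$ was arbitrary, \emph{WOP}$(\omega^\alpha)$ holds, and arithmetical comprehension follows. The main technical obstacle is the order-reflection verification for~$f$: its success depends crucially on the non-increasing nature of the image chains, which prevents clause~(ii) of the gap-order characterisation from producing comparisons incompatible with the additive structure of Cantor normal forms.
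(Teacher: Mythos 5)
Your proposal is correct and takes essentially the same route as the paper's own proof: the dilator $W(X)=1+X$, whose initial fixed point consists of $\nu$-labelled chains under the gap order, an order-reflecting map sending Cantor normal forms (exponent sequences) to such chains, and the Girard--Hirst equivalence of arithmetical comprehension with the well-orderedness of $\omega^\alpha$ (which is the appropriate citation here, rather than \cite{simpson09}). The only cosmetic difference is that the paper appends a bottom node with label $0$, so that the empty exponent sequence ($\beta=0$) also has an image, whereas your $f$ is undefined at $0$ -- harmless, since an infinite strictly descending sequence in $\omega^\alpha$ never reaches $0$.
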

\begin{proof}
Let $\omega(\nu)$ be the set of sequences $\langle\alpha_0,\ldots,\alpha_{n-1}\rangle$ with $\nu>\alpha_0\geq\ldots\geq\alpha_{n-1}$ (think of Cantor normal forms). The lexicographic order on~$\omega(\nu)$ can then be characterized by $\langle\rangle\leq_{\omega(\nu)}\sigma$ and
\begin{equation*}
\alpha^\frown\sigma\leq_{\omega(\nu)}\beta^\frown\tau\quad\Leftrightarrow\quad \alpha<\beta\text{ or }(\alpha=\beta\text{ and }\sigma\leq_{\omega(\nu)}\tau).
\end{equation*}
Arithmetical comprehension is equivalent to the statement that this order is well founded for every ordinal~$\nu$, by a result of Girard~\cite[Section~II.5]{girard87} and Hirst~\cite{hirst94}. To apply the uniform Kruskal-Friedman theorem, consider the transformation
\begin{equation*}
X\mapsto W(X):=1+X=\{0\}\cup\{1+x\,|\,x\in X\}
\end{equation*}
of partial orders, where we declare that $0\leq_{W(X)}0$ and $1+x\leq_{W(X)}1+y$ for $x\leq_X y$ are the only inequalities in~$W(X)$. For a quasi embedding $f:X\to Y$, we define $W(f):W(X)\to W(Y)$ and $\supp_X:W(X)\to[X]^{<\omega}$ by setting
\begin{align*}
f(0)&:=0, & \supp_X(0)&:=\emptyset,\\
f(1+x)&:=1+f(x), & \supp_X(1+x)&:=\{x\}.
\end{align*}
It is straightforward to check that this turns $W$ into a normal $\wpo$-dilator. By~the uniform Kruskal-Friedman theorem for~$\nu$, we get a well partial order~$X$ and a bijection $\kappa:\nu\times W(X)\to X$ that validates the equivalence from Definition~\ref{def:nu-FP}. Assuming $\nu>0$, we define $f:\omega(\nu)\to X$ by the recursive clauses
\begin{equation*}
f(\langle\rangle):=\kappa(0,0)\quad\text{and}\quad f(\alpha^\frown\sigma):=\kappa(\alpha,1+f(\sigma)).
\end{equation*}
One can show that $f(\sigma)\leq_X f(\tau)$ entails $\sigma\leq_{\omega(\nu)}\tau$, by a straightforward induction on the combined lenghts of~$\sigma$ and $\tau$ (for $\tau=\beta^\frown\rho$ prove $\rho\leq_{\omega(\nu)}\tau$ as preparation). Given that $X$ is a well partial order, it follows that $\omega(\nu)$ is well founded, as needed to get arithmetical comprehension via the cited result of Girard and Hirst.
\end{proof}

As indicated in the introduction, we now connect the linear and the partial~case. A quasi embedding $f:X\to Y$ of a linear order~$X$ into a partial order~$Y$ can be seen as a linearization of its range. If we know that~$Y$ is a well partial order, then we can conclude that~$X$ is a well order. To exploit this crucial fact, we shall extend the notion from single orders to order transformations, i.\,e., to dilators. By a predilator~$D$ on linear orders we mean a predilator in the sense of~\cite[Definition~1.1]{FR_Pi11-recursion} (which is the obvious linear counterpart of Definition~\ref{def:po-dilator} above). If~$D(X)$ is well founded for any well order~$X$, we say that $D$ is a dilator on linear orders.

\begin{definition}
Consider a $\po$-dilator~$W$ and a predilator~$D$ on linear orders. A quasi embedding $\eta:D\Rightarrow W$ consists of a quasi embedding $\eta_X:D(X)\to W(X)$ for each linear order~$X$, such that the naturality condition $W(f)\circ\eta_X=\eta_Y\circ D(f)$ is satisfied for any embedding~$f:X\to Y$ of linear orders.
\end{definition}

Recall that we write $\mathcal TW[\nu]$ to denote the initial $\nu$-Kruskal fixed point of a normal $\po$-dilator~$W$. For each predilator~$D$ on linear orders, we have a unique linear order~$\psi_\nu(D)$ that is a $\nu$-fixed point in the sense of~\cite[Definition~1.4]{FR_Pi11-recursion}, due to Corollary~2.2 and Theorem~2.9 from the same reference. The cited definition of $\nu$-fixed points in the linear case will be recalled at the beginning of the following proof. Note that a related result for the simpler case~$\nu=1$ has been shown in~\cite{frw-kruskal}.

\begin{theorem}\label{thm:fp-lin-part}
Given a quasi embedding of a predilator~$D$ on linear orders into a normal~$\po$-dilator~$W$, we obtain a quasi embedding of~$\psi_\nu(D)$ into~$\mathcal T W[\nu]$.
\end{theorem}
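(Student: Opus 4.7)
The plan is to mimic the construction used in \cite{frw-kruskal} for the case $\nu=1$, simultaneously defining $f$ and verifying that it is a quasi embedding. Write $Y:=\psi_\nu(D)$ and denote by $\pi:Y\to\nu\times D(Y)$ the associated collapse from~\cite[Definition~1.4]{FR_Pi11-recursion}, so that each $y\in Y$ admits a decomposition $\pi(y)=(\alpha_y,\sigma_y)$ with a finite support $\supp_Y(\sigma_y)\subseteq Y$. The defining property of a $\nu$-collapse ensures a rank function $h:Y\to\mathbb N$ satisfying $h(y')<h(y)$ whenever $y'\in\supp_Y(\sigma_y)$. The target function $f:Y\to\mathcal TW[\nu]$ is intended to satisfy
\begin{equation*}
f(y)=\kappa^\nu_W\big(\alpha_y,\,W(f)(\eta_Y(\sigma_y))\big),
\end{equation*}
which is precisely the commutative diagram displayed in the introduction.

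First, I would justify the recursion. By the support condition for~$W$ and the naturality of~$\eta$, the value $W(f)(\eta_Y(\sigma_y))$ depends only on the restriction of~$f$ to $\supp_Y(\sigma_y)$, where all elements have strictly smaller $h$-rank. This permits a definition of $f$ by recursion on~$h$, mirroring the existence step in the proof of Theorem~\ref{thm:initial-criterion}. In parallel I would prove, by induction on $h(y_0)+h(y_1)$, the implication
\begin{equation*}
f(y_0)\leq_{\mathcal TW[\nu]}f(y_1)\quad\Rightarrow\quad y_0\leq_Y y_1,
\end{equation*}
which is what it means for $f$ to be a quasi embedding. Writing $\pi(y_i)=(\alpha_i,\sigma_i)$ and applying Lemma~\ref{lem:normal-form} if necessary, the equivalence from Definition~\ref{def:nu-FP} splits the hypothesis into $\alpha_0\leq\alpha_1$ together with one of: (a)~$W(f)(\eta_Y(\sigma_0))\leq_{W(\mathcal TW[\nu])}W(f)(\eta_Y(\sigma_1))$, or (b)~$f(y_0)\leqf_{\mathcal TW[\nu]}\supp_{\mathcal TW[\nu]}(W(f)(\eta_Y(\sigma_1)))$. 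In case~(a), since $W(f)$ is a quasi embedding (as a functor in~$\po$ applied to a quasi embedding) and~$\eta_Y$ is a quasi embedding by assumption, one obtains $\sigma_0\leq_{D(Y)}\sigma_1$; combined with $\alpha_0\leq\alpha_1$, the collapse equivalence on~$Y$ gives $y_0\leq_Y y_1$. In case~(b), naturality of supports identifies $\supp_{\mathcal TW[\nu]}(W(f)(\eta_Y(\sigma_1)))$ with $[f]^{<\omega}(\supp_Y(\sigma_1))$, so some $y'\in\supp_Y(\sigma_1)$ satisfies $f(y_0)\leq_{\mathcal TW[\nu]}f(y')$; the induction hypothesis yields $y_0\leq_Y y'$, and the analogue of clause~(ii) in the collapse condition for~$\pi$ then delivers $y_0\leq_Y y_1$.

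The main obstacle, I expect, is reconciling the linear-order collapse equivalence on~$\psi_\nu(D)$ with the partial-order equivalence from Definition~\ref{def:nu-FP}: the linear version involves strict comparisons on the labels~$\alpha_i$ and a slightly different form of the gap clause, so one has to verify, case by case, that normality of~$W$ together with the quasi embedding property of~$\eta_Y$ supplies the data needed to translate each sub-case (in particular when $\alpha_0<\alpha_1$ but the comparison on $D(Y)$ fails). Once this bookkeeping is in place, the remaining details are routine adaptations of the $\nu=1$ proof in~\cite{frw-kruskal}, made transparent by the explicit presentation of $\mathcal TW[\nu]$ through Definitions~\ref{def:TW-nu} and~\ref{def:kappa} and the initiality criterion of Theorem~\ref{thm:initial-criterion}.
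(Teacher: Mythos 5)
Your set-up (recursion on a rank derived from the well founded relation $s\tl t:\Leftrightarrow s\in\supp^D_Y(\tau)$ for $\pi(t)=(\alpha,\tau)$, factoring through finite supports so that $W(f)$ is only applied to restrictions already known to be quasi embeddings, and the simultaneous induction showing order reflection) matches the paper, and your case~(a) is essentially the paper's argument. The genuine gap is in case~(b). The linear $\nu$-fixed point of~\cite[Definition~1.4]{FR_Pi11-recursion} has \emph{no} analogue of the second disjunct of Definition~\ref{def:nu-FP}: the order on $Y=\psi_\nu(D)$ is simply pulled back along the embedding $\pi:Y\to\nu\times D(Y)$, so $y_0\leq_Y y_1$ means $\alpha_0<\alpha_1$, or $\alpha_0=\alpha_1$ and $\sigma_0\leq_{D(Y)}\sigma_1$ -- nothing else. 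In particular, an element $y'\in\supp^D_Y(\sigma_1)$ need not satisfy $y'\leq_Y y_1$ (think of collapsing functions: a subterm of the argument whose own label is large can exceed the collapsed value), so your inference from $y_0\leq_Y y'$ to $y_0\leq_Y y_1$ via an ``analogue of clause~(ii) in the collapse condition'' appeals to a clause that does not exist, and the conclusion is false without further input.

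What the collapse actually provides, and what the paper's proof must exploit, is the range characterization $\rng(\pi)=\{(\alpha,\tau)\,|\,G_\alpha(\tau)\lef_{D(Y)}\tau\}$, where $G_\alpha$ collects the subterms reachable through nodes whose labels stay $\geq\alpha$. The gap case is handled by an auxiliary statement (the paper's~($\mathsection$)), proved by a separate induction along~$\tl$ on a suitable closure~$c$ of $a(0)\cup a(1)$: whenever $\kappa(\gamma,\rho)\leq_{\mathcal TW[\nu]}f(s)$, one finds $\sigma\in D(c)$ with $\rho\leq_{W(\mathcal TW[\nu])}W(f\circ\iota')\circ\eta_c(\sigma)$ and $D(\iota')(\sigma)\in G^D_\gamma(s)$. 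The point is that the chain of gap-inequalities in $\mathcal TW[\nu]$ is traced back to an element of $D(Y)$ while recording that all labels passed are $\geq\gamma$; applying this with $\gamma=\alpha_0=\alpha_1$ and using $G^D_{\alpha_1}(t)\subseteq G_{\alpha_1}(\tau_1)\lef_{D(Y)}\tau_1$ then gives $\tau_0\leq D(\iota')(\sigma)<\tau_1$, hence $\pi(t_0)<\pi(t_1)$ and $t_0<_Y t_1$. Note also that the conclusion one gets (and needs) here is a strict inequality in $D(Y)$, obtained from the strictness in the range condition; your route could at best produce a non-strict comparison of elements of~$Y$, which is not the shape of information the linear fixed point can absorb. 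So the ``bookkeeping'' you defer is in fact the core of the proof, and it requires the $G_\gamma$-machinery rather than a case-by-case translation of the two equivalences.
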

\begin{proof}
According to~\cite[Definition~1.4]{FR_Pi11-recursion}, the $\nu$-fixed point $\psi_\nu(D)$ comes with a map
\begin{equation*}
\pi:\psi_\nu(D)\to\nu\times D(\psi_\nu(D))
\end{equation*}
that satisfies certain conditions. In order to state the latter, we consider the support functions $\supp^D_X:D(X)\to[X]^{<\omega}$ that are associated with the given predilator~$D$ (see \cite[Definition~1.1]{FR_Pi11-recursion}). Let the binary relation $\tl$ on $Y:=\psi_\nu(D)$ be given by
\begin{equation*}
s\tl t\quad :\Leftrightarrow\quad s\in\supp^D_Y(\tau)\text{ for }\pi(t)=(\alpha,\tau).
\end{equation*}
By the first condition from~\cite[Definition~1.4]{FR_Pi11-recursion}, this relation $\tl$ must be well founded. For any $\gamma<\nu$, we can thus use recursion along $\tl$ to define $G^D_\gamma:Y\to[D(Y)]^{<\omega}$ and simultaneously $G_\gamma:D(Y)\to[D(Y)]^{<\omega}$ with
\begin{align*}
G^D_\gamma(t)&:=\begin{cases}
\{\tau\}\cup G_\gamma(\tau) & \text{if $\pi(t)=(\alpha,\tau)$ with $\alpha\geq\gamma$},\\
\emptyset & \text{if $\pi(t)=(\alpha,\tau)$ with $\alpha<\gamma$},
\end{cases}\\
G_\gamma(\tau)&:=\bigcup\{G^D_\gamma(s)\,|\,s\in\supp^D_Y(\tau)\}.
\end{align*}
Now the final and crucial condition from~\cite[Definition~1.4]{FR_Pi11-recursion} asserts that we have
\begin{equation*}
\rng(\pi)=\{(\alpha,\tau)\in\nu\times D(Y)\,|\,G_\alpha(\tau)\lef_{D(Y)}\tau\}.
\end{equation*}
It may be instructive to consider the similarity with Definition~\ref{def:fp-gap-order}. The rest of the present proof will shed further light on the significance of~$G_\alpha$. Let us also point out that the well foundedness of~$\tl$ relates to criterion~(ii) from Theorem~\ref{thm:initial-criterion}. Indeed, all $\nu$-fixed points of dilators on linear orders are initial by~\cite[Proposition~2.1]{FR_Pi11-recursion}.

Let us write $\eta:D\Rightarrow W$ for the given quasi embedding. As indicated in the introduction, we shall define a quasi embedding $f$ such that the diagram
\begin{equation*}
\begin{tikzcd}[column sep=large]
\psi_\nu(D)\ar[rr,"f"]\ar[d,swap,"\pi"] & & \mathcal T W[\nu]\\
\nu\times D(\psi_\nu(D))\ar[r,"\nu\times\eta_Y"] & \nu\times W(\psi_\nu(D))\ar[r,"\nu\times W(f)"] & \nu\times W(\mathcal T W[\nu])\ar[u,swap,"\kappa"]
\end{tikzcd}
\end{equation*}
commutes, for $Y=\psi_\nu(D)$ and $(\nu\times g)(\alpha,\tau)=(\alpha,g(\tau))$. The point is that this diagram can be rewritten as a recursive equation. To see this, consider $t\in\psi_\nu(D)$ and write $\pi(t)=(\alpha,\tau)$. Due to the support condition from \cite[Definition~1.1]{FR_Pi11-recursion}, we have a unique normal form
\begin{equation*}
\tau\nf D(\iota_a)(\tau')\quad:\Leftrightarrow\quad \tau=D(\iota_a)(\tau')\text{ with }a=\supp^D_Y(\tau)\text{ and }\tau'\in D(a),
\end{equation*}
where $\iota_a:a\hookrightarrow Y$ denotes the inclusion. In view of $\eta_Y\circ D(\iota_a)=W(\iota_a)\circ\eta_a$, our diagram commutes precisely when we have
\begin{equation*}
f(t)=\kappa(\alpha,W(f\circ\iota_a)\circ\eta_a(\tau'))\qquad\text{for }\pi(t)=(\alpha,\tau)\text{ with }\tau\nf D(\iota_a)(\tau').
\end{equation*}
As $s\in a$ means $s\tl t$, this equation can be read as a recursive clause, which is satisfied by at most one~$f$. Concerning existence, we note that our clause can only be applied when $f\circ\iota_a$ is a quasi embedding, so that $W(f\circ\iota_a)$ is defined. To complete the recursive definition regardless, we agree to assign some default value~$f(t)\in\mathcal T W[\nu]$ in the hypothetical case that $f\circ\iota_a$ is no quasi embedding.

We want to show that this hypothetical case does indeed not occur and, in doing so, that $f$ is a quasi embedding. For this purpose, we first use recursion over~$\tl$ to define a length function $l:\psi_\nu(D)\to\mathbb N$ with
\begin{equation*}
l(t):=1+\textstyle\sum_{s\in a}2\cdot l(s)\qquad\text{for }\pi(t)=(\alpha,\tau)\text{ and }a=\supp^D_Y(\tau).
\end{equation*}
We shall now use induction on $l(t_0)+l(t_1)$ to prove
\begin{equation*}
f(t_0)\leq_{\mathcal T W[\nu]} f(t_1)\quad\Rightarrow\quad t_0\leq_Y t_1.
\end{equation*}
Let us write $\pi(t_i)=(\alpha_i,\tau_i)$ and $\tau_i\nf D(\iota_{a(i)})(\tau'_i)$. By the induction hypothesis, $f$~is a quasi embedding on~$a(0)\cup a(1)$. In particular, we are not in the hypothetical case that was mentioned above, so that $f(t_i)$ is defined by the intended recursive clause. This means that $f(t_0)\leq f(t_1)$ amounts to
\begin{equation*}
\kappa(\alpha_0,W(f\circ\iota_{a(0)})\circ\eta_{a(0)}(\tau'_0))\leq_{\mathcal T W[\nu]}\kappa(\alpha_1,W(f\circ\iota_{a(1)})\circ\eta_{a(1)}(\tau'_1)).
\end{equation*}
By the equivalence from Definition~\ref{def:nu-FP}, we must have~$\alpha_0\leq\alpha_1$. If the inequality is strict, we get $\pi(t_0)<\pi(t_1)$ in the linear order $\nu\times D(Y)$ and hence $t_0<_Y t_1$ in~$Y$, as~$\pi$ is an embedding by~\cite[Definition~1.4]{FR_Pi11-recursion}. In the following we assume~$\alpha_0=\alpha_1$. Let us begin with the case where $f(t_0)\leq f(t_1)$ holds by the first disjunct in the equivalence from Definition~\ref{def:nu-FP}, so that we have
\begin{equation*}
W(f\circ\iota_{a(0)})\circ\eta_{a(0)}(\tau'_0)\leq_{W(\mathcal T W[\nu])}W(f\circ\iota_{a(1)})\circ\eta_{a(1)}(\tau'_1).
\end{equation*}
We factor the inclusions $\iota_{a(i)}$ as in the commutative diagram
\begin{equation*}
\begin{tikzcd}[every label/.append style = {font = \small},column sep=large]
a(i)\ar[r,hook,"\iota_{a(i)}"]\ar[d,hook,swap,"{\iota_{a(i)}'}"] & Y\\
a(0)\cup a(1).\ar[ur,hook,swap,"\iota"] &
\end{tikzcd}
\end{equation*}
As noted above, the induction hypothesis ensures that~$f\circ\iota$ is a quasi embedding. We can thus form $W(f\circ\iota)$ and compute
\begin{equation*}
W(f\circ\iota_{a(i)})\circ\eta_{a(i)}(\tau'_i)=W(f\circ\iota)\circ\eta_{a(0)\cup a(1)}\circ D(\iota'_{a(i)})(\tau'_i).
\end{equation*}
Since $W(f\circ\iota)\circ\eta_{a(0)\cup a(1)}$ is a quasi embedding while $D(\iota)$ is an embedding, we get
\begin{equation*}
\tau_0=D(\iota)\circ D(\iota'_{a(0)})(\tau'_0)\leq_{D(Y)}D(\iota)\circ D(\iota'_{a(1)})(\tau'_1)=\tau_1.
\end{equation*}
Once again this yields $\pi(t_0)\leq\pi(t_1)$ and hence~$t_0\leq t_1$. We now consider the case where the second disjunct from Definition~\ref{def:nu-FP} applies, which means that we have
\begin{equation*}
f(t_0)\leqf_{\mathcal T W[\nu]}{\supp^W_{\mathcal T W[\nu]}}\circ W(f\circ\iota_{a(1)})\circ\eta_{a(1)}(\tau'_1).
\end{equation*}
Here we write $\supp^W_X:W(X)\to[X]^{<\omega}$ for the support functions that come with the $\po$-dilator~$W$. As explained in the paragraph after Definition~\ref{def:po-dilator}, the supports are uniquely determined by~$W$ as a functor. It follows that any quasi embedding such as~$\eta:D\Rightarrow W$ respects supports, as shown in~\cite[Lemma~4.2]{frw-kruskal}. Together with naturality, we obtain
\begin{multline*}
{\supp^W_{\mathcal T W[\nu]}}\circ W(f\circ\iota_{a(1)})\circ\eta_{a(1)}(\tau'_1)=[f\circ\iota_{a(1)}]^{<\omega}\circ{\supp^W_{a(1)}}\circ\eta_{a(1)}(\tau'_1)=\\
[f\circ\iota_{a(1)}]^{<\omega}\circ\supp^D_{a(1)}(\tau'_1)=[f]^{<\omega}\circ{\supp^D_Y}(\tau_1).
\end{multline*}
We thus get $f(t_0)\leq f(t)$ for some $t\in\supp^D_Y(\tau_1)=a(1)$. Let us put
\begin{equation*}
c:=\{s\in Y\,|\,l(s)\leq l(s')\text{ for some }s'\in a(0)\cup a(1)\}.
\end{equation*}
Alternatively, one could take~$c$ to be the transitive closure of~$a(0)\cup a(1)$ under~$\tl$ (which can be seen as a subterm relation). Write $\iota':c\hookrightarrow Y$ for the inclusion and observe that $f\circ\iota'$ is a quasi embedding by induction hypothesis (due to the factor~$2$ in the definition of~$l$). We will show that any $s\in c$ validates the following:
\begin{equation}\label{eq:linearize-gap}\tag{$\mathsection$}
\parbox{.9\textwidth}{Whenever we have $\kappa(\gamma,\rho)\leq_{\mathcal T W[\nu]} f(s)$, we get
\begin{equation*}
\rho\leq_{W(\mathcal T W[\nu])} W(f\circ\iota')\circ\eta_c(\sigma)
\end{equation*}
for some $\sigma\in D(c)$ with $D(\iota')(\sigma)\in G^D_\gamma(s)$.}
\end{equation}
Before we prove this statement, we show how the desired inequality $t_0\leq t_1$ follows. To this end, we apply~(\ref{eq:linearize-gap}) with $\kappa(\gamma,\rho):=f(t_0)$ and $s:=t\in a(1)\subseteq c$ from above. Let us extend our earlier diagram of inclusions into
\begin{equation*}
\begin{tikzcd}[every label/.append style = {font = \small},column sep=large]
a(i)\ar[r,hook,"\iota_{a(i)}"]\ar[d,hook,swap,"{\iota_{a(i)}'}"] & Y\\
a(0)\cup a(1)\ar[ur,hook,"\iota"]\ar[r,hook,"{\iota''}"] & c.\ar[u,swap,hook,"{\iota'}"]
\end{tikzcd}
\end{equation*}
Due to the definition of $f(t_0)$, we get $\gamma=\alpha_0=\alpha_1=:\alpha$ as well as
\begin{equation*}
\rho=W(f\circ\iota_{a(0)})\circ\eta_{a(0)}(\tau'_0)=W(f\circ\iota')\circ\eta_c\circ D(\iota''\circ\iota'_{a(0)})(\tau'_0).
\end{equation*}
For $\sigma$ as provided by~(\ref{eq:linearize-gap}), we can derive
\begin{equation*}
\tau_0=D(\iota_{a(0)})(\tau'_0)=D(\iota')\circ D(\iota''\circ\iota'_{a(0)})(\tau_0)\leq_{D(Y)} D(\iota')(\sigma)\in G^D_\alpha(t).
\end{equation*}
In view of the characterization of $\rng(\pi)\ni\pi(t_1)=(\alpha,\tau_1)$ from~\cite[Definition~1.4]{FR_Pi11-recursion}, which was recalled above, we also have
\begin{equation*}
G^D_\alpha(t)\subseteq G_\alpha(\tau_1)\lef_{D(Y)}\tau_1.
\end{equation*}
We obtain $\tau_0\leq D(\iota')(\sigma)<\tau_1$ and hence $\pi(t_0)<\pi(t_1)$ and $t_0<t_1$.

To complete the proof, we establish~(\ref{eq:linearize-gap}) by induction on~$s\in c$ in the well order~$\tl$. Let us write $\pi(s)=(\beta,\sigma_0)$ with $\sigma_0\nf D(\iota_b)(\sigma'_0)$, so that the premise of~(\ref{eq:linearize-gap}) reads
\begin{equation*}
\kappa(\gamma,\rho)\leq_{\mathcal T W[\nu]} f(s)=\kappa(\beta,W(f\circ\iota_b)\circ\eta_b(\sigma'_0)).
\end{equation*}
Analogous to the above, we factor $\iota_b=\iota'\circ\iota'_b:b\hookrightarrow Y$ with $\iota_b':b\hookrightarrow c$. In view of Definition~\ref{def:nu-FP}, we have $\gamma\leq\beta$ and one of the following two cases applies. First assume that we have
\begin{equation*}
\rho\leq_{W(\mathcal T W[\nu])}W(f\circ\iota_b)\circ\eta_b(\sigma'_0)=W(f\circ\iota')\circ\eta_c\circ D(\iota'_b)(\sigma'_0).
\end{equation*}
Then (\ref{eq:linearize-gap}) is valid with $\sigma:=D(\iota'_b)(\sigma'_0)$, as $\pi(s)=(\beta,\sigma_0)$ and $\beta\geq\gamma$ yield
\begin{equation*}
D(\iota')(\sigma)=D(\iota_b)(\sigma_0')=\sigma_0\in G^D_\gamma(s).
\end{equation*}
In the remaining case from Definition~\ref{def:nu-FP}, we have
\begin{equation*}
\kappa(\gamma,\rho)\leqf_{\mathcal T W[\nu]}{\supp^W_{\mathcal T W[\nu]}}\circ W(f\circ\iota_b)\circ\eta_b(\sigma'_0)=[f]^{<\omega}\circ{\supp^D_Y}(\sigma_0),
\end{equation*}
where equality is shown as above. We get $\kappa(\gamma,\rho)\leq f(s')$ for some $s'\in\supp^D_Y(\sigma_0)$. Since the latter entails $s'\tl s$ and in particular~$s'\in c$, we inductively obtain an element $\sigma\in D(c)$ with $\rho\leq W(f\circ\iota')\circ\eta_c(\sigma)$ and
\begin{equation*}
D(\iota')(\sigma)\in G^D_\gamma(s')\subseteq G_\gamma(\sigma_0)\subseteq G^D_\gamma(s),
\end{equation*}
as needed to complete the proof of~(\ref{eq:linearize-gap}).
\end{proof}

Let us deduce the remaining direction of Theorem~\ref{thm:main} from the introduction.

\begin{theorem}
The uniform Kruskal-Friedman theorem for any infinite ordinal~$\nu$ entails $\Pi^1_1$-recursion along the same~$\nu$, over the base theory $\rca_0+\mathsf{CAC}$.
\end{theorem}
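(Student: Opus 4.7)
The strategy is to apply the equivalence from \cite{FR_Pi11-recursion} that reduces $\Pi^1_1$-recursion along~$\nu$ to the statement that every dilator on linear well orders admits a $\nu$-collapse whose carrier is itself a well order. So given an arbitrary dilator $D$ on linear orders, the task reduces to producing a well order $Y$ and a map $\pi:Y\to\nu\times D(Y)$ satisfying the $\nu$-fixed point conditions from \cite[Definition~1.4]{FR_Pi11-recursion}, with $Y$ in fact well founded under the assumption that the uniform Kruskal-Friedman theorem holds for~$\nu$.

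As the first concrete step, I invoke Lemma~\ref{lem:KF-to-ACA} so that arithmetical comprehension is available throughout the rest of the argument. Next, by the central construction from \cite{frw-kruskal}, for the given linear predilator~$D$ we obtain a normal $\wpo$-dilator~$W$ together with a quasi embedding $\eta:D\Rightarrow W$ in the sense defined just before Theorem~\ref{thm:fp-lin-part}; this is the piece that transfers a linear-order problem into the partial-order setting. In parallel, the existence and uniqueness of the linear $\nu$-fixed point $\psi_\nu(D)$ with its associated map $\pi:\psi_\nu(D)\to\nu\times D(\psi_\nu(D))$ is guaranteed by Corollary~2.2 and Theorem~2.9 of \cite{FR_Pi11-recursion}, cited immediately before Theorem~\ref{thm:fp-lin-part}. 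So we may assume $\psi_\nu(D)$ is given as a linear order, and the entire issue is to prove that it is well founded.

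Now I bring in the hypothesis. Since $W$ is a normal $\wpo$-dilator, the uniform Kruskal-Friedman theorem for~$\nu$ (Theorem~\ref{thm:unif-KF}) tells us that $\mathcal T W[\nu]$ is a well partial order. By Theorem~\ref{thm:fp-lin-part}, the quasi embedding $\eta:D\Rightarrow W$ yields a quasi embedding $f:\psi_\nu(D)\to\mathcal TW[\nu]$. A quasi embedding from a linear order into a well partial order must have well-founded domain: given any descending sequence in $\psi_\nu(D)$, its image is an infinite antichain-free sequence in $\mathcal TW[\nu]$ in which no later term is $\geq$ an earlier one, contradicting the $\wpo$ property (this step uses $\mathsf{CAC}$ in passing through equivalent definitions of~$\wpo$, and is immediate since $f$ is order reflecting and $\psi_\nu(D)$ is linear). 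Hence $\psi_\nu(D)$ is a well order, so $\pi:\psi_\nu(D)\to\nu\times D(\psi_\nu(D))$ is the desired $\nu$-collapse of~$D$ with well-founded carrier, and $\Pi^1_1$-recursion along~$\nu$ follows via the quoted equivalence from \cite{FR_Pi11-recursion}.

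The hard work has really been done already: the construction of $\eta:D\Rightarrow W$ in \cite{frw-kruskal} and the linear $\nu$-collapse theory of \cite{FR_Pi11-recursion} are the substantial inputs, while Theorem~\ref{thm:fp-lin-part} supplies the bridge between the linear and partial settings. The only subtlety in writing the proof up is to be careful that the formal representations used for dilators, predilators and supports match between the two cited frameworks, and that the cited equivalence for $\Pi^1_1$-recursion from \cite{FR_Pi11-recursion} is stated in a form that can be applied over $\rca_0+\mathsf{CAC}+\mathsf{ACA}_0$; once those bookkeeping issues are dispatched, the deduction is essentially a one-line assembly of Theorem~\ref{thm:fp-lin-part} with the fact that quasi embeddings of linear orders into well partial orders have well-founded domain.
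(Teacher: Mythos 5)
Your proposal is correct and follows essentially the same route as the paper's own proof: reduce $\Pi^1_1$-recursion along~$\nu$ via \cite[Theorem~1.6]{FR_Pi11-recursion} to the well-foundedness of the $\nu$-fixed point $\psi_\nu(D)$, obtain $\eta:D\Rightarrow W$ from~\cite{frw-kruskal} (with Lemma~\ref{lem:KF-to-ACA} supplying arithmetical comprehension), apply Theorem~\ref{thm:fp-lin-part} to get the quasi embedding $f:\psi_\nu(D)\to\mathcal TW[\nu]$, and conclude from the uniform Kruskal-Friedman theorem that $\psi_\nu(D)$ is a well order. The only cosmetic difference is your slightly convoluted aside about $\mathsf{CAC}$ in the final step, which is unnecessary since the adopted definition of well partial order already gives $f(t_i)\leq f(t_j)$ for some $i<j$ and hence $t_i\leq t_j$ directly.
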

\begin{proof}
According to~\cite[Theorem~1.6]{FR_Pi11-recursion}, the principle of $\Pi^1_1$-comprehension along~$\nu$ reduces to the claim that every dilator~$D$ on linear orders has a well founded $\nu$-fixed point in the sense of~\cite[Definition~1.4]{FR_Pi11-recursion}. We note that the restriction to infinite~$\nu$ is inherited from the cited theorem (but see~\cite[Corollary~4.4]{FR_Pi11-recursion} for the case of externally fixed~$\nu<\omega$). To establish the claim about $\nu$-fixed points, we assume the uniform Kruskal-Friedman theorem for~$\nu$ and consider an arbitrary dilator~$D$ on linear orders. One can find a quasi embedding
\begin{equation*}
\eta:D\Rightarrow W
\end{equation*}
into a normal~$\wpo$-dilator~$W$, as shown in~\cite[Section~5]{frw-kruskal} (where dilators on linear orders are called $\mathsf{WO}$-dilators). The cited reference uses arithmetical \mbox{comprehension}, which we can accommodate by Lemma~\ref{lem:KF-to-ACA} above. We note that substantial work goes into the proof that $W$ preserves well partial orders (see~\cite[Theorem~5.11]{frw-kruskal}). Due to Theorem~\ref{thm:fp-lin-part} above, we can convert $\eta$ into a quasi embedding
\begin{equation*}
f:\psi_\nu(D)\to\mathcal T W[\nu],
\end{equation*}
where the linear order $\psi_\nu(D)$ is the unique $\nu$-fixed point of~$D$ while the partial order $\mathcal T W[\nu]$ is the initial $\nu$-Kruskal fixed point of~$W$ (see Section~2 of~\cite{FR_Pi11-recursion} and of the present paper, respectively). So any infinite sequence $t_0,t_1,\ldots$ in $\psi_\nu(D)$ gives rise to a sequence $f(t_0),f(t_1),\ldots$ in~$\mathcal T W[\nu]$. The latter is a well partial order by the uniform Kruskal-Friedman theorem. Thus there must be $i<j$ with $f(t_i)\leq f(t_j)$ and hence $t_i\leq t_j$, which allows us to conclude that $\psi_\nu(D)$ is a well order. As indicated at the beginning of this proof, we can now invoke~\cite[Theorem~1.6]{FR_Pi11-recursion} to secure $\Pi^1_1$-recursion along~$\nu$.
\end{proof}

We have just shown that (i) implies~(ii) in Theorem~\ref{thm:main}. To conclude this paper, we recall how the converse implication is obtained by combining previous results: From Theorem~\ref{thm:Pi11-rec-to-Kriz} we know that $\Pi^1_1$-comprehension along~$\nu$ implies K\v{r}\'{i}\v{z}'s mini\-mality principle for~$\nu$. The latter entails the uniform Kruskal-Friedman theorem for~$\nu$, due to Theorem~\ref{thm:Kriz-to-KF}. The circle of implications shows that $\Pi^1_1$-recursion is equivalent not only to the uniform Kruskal-Friedman theorem but also to K\v{r}\'{i}\v{z}'s minimality principle (up to the paragraph before Theorem~\ref{thm:Pi11-rec-to-Kriz}), which is an interesting result in its own right.

\bibliographystyle{amsplain}
\bibliography{Uniform-Kruskal-Friedman_Freund}

\end{document}